\theoremstyle{plain}                    
\newtheorem{thm}{Theorem}[section]
\newtheorem{proposition}[thm]{Proposition}
\newtheorem{remark}{Remark}[section]
\newenvironment{proof}{{\noindent\it Proof.}\quad}{\hfill $\square$\par}
\numberwithin{equation}{section}
\numberwithin{figure}{section}
\numberwithin{table}{section}
\newcommand\eref[1]{(\ref{#1})}
\newcommand*\xbar[1]{%
  \hbox{%
    \vbox{%
      \hrule height 0.5pt 
      \kern0.4ex
      \hbox{%
        \kern-0.05em
        \ensuremath{#1}%
        \kern-0.00em
      }%
    }%
  }%
}
\newcommand{\uvec}[2][3]{\bm{#2\mkern-#1mu}\mkern#1mu}
\newcommand{\dt}{\Delta t}
\newcommand{\dx}{\Delta x}
\newcommand{\dy}{\Delta y}
\newcommand{\hf}{{\frac{1}{2}}}
\newcommand{\jph}{{j+\frac{1}{2}}}
\newcommand{\jmh}{{j-\frac{1}{2}}}
\newcommand{\kph}{{k+\frac{1}{2}}}
\newcommand{\kmh}{{k-\frac{1}{2}}}
\newcommand{\Vbar}{\overline{\uvec{V}}}
\title{Dual Formulation Finite-Volume Methods on Overlapping Meshes for Hyperbolic Conservation Laws}
\author{R\'emi Abgrall\thanks{Institute for Mathematics \& Computational Science, University of Zurich, 8057 Zurich, Switzerland;
{\tt remi.abgrall@math.uzh.ch}}, ~Alina Chertock\thanks{Department of Mathematics, North Carolina State University, Raleigh, NC 27695, USA;
{\tt chertock@math.ncsu.edu}}, ~Alexander Kurganov\thanks{Department of Mathematics and Shenzhen International Center for Mathematics,
Southern University of Science and Technology, Shenzhen, 518055, China; {\tt alexander@sustech.edu.cn}},~ and Lorenzo
Micalizzi\thanks{Department of Mathematics, North Carolina State University, Raleigh, NC 27695, USA; {\tt lmicali@ncsu.edu}}}
\date{}
\begin{document}
\maketitle
\begin{abstract}
In this work, we introduce new second-order schemes for one- and two-dimensional hyperbolic systems of conservation laws. Following an
approach recently proposed in [{\sc R. Abgrall}, Commun. Appl. Math. Comput., 5 (2023), pp. 370--402], we consider two different
formulations of the studied system (the original conservative formulation and a primitive one containing nonconservative products), and
discretize them on overlapping staggered meshes using two different numerical schemes. The novelty of our approach is twofold. First, we
introduce an original paradigm making use of overlapping finite-volume (FV) meshes over which cell averages of conservative and primitive
variables are evolved using semi-discrete FV methods: The nonconservative system is discretized by a path-conservative central-upwind
scheme, and its solution is used to evaluate very simple numerical fluxes for the discretization of the original conservative system.
Second, to ensure the nonlinear stability of the resulting method, we design a post-processing, which also guarantees a conservative
coupling between the two sets of variables. We test the proposed semi-discrete dual formulation finite-volume methods on several benchmarks
for the Euler equations of gas dynamics.
\end{abstract}

\smallskip
\noindent
{\bf Key words:} Dual formulation finite-volume methods; overlapping staggered meshes; path-conservative central-upwind schemes;
conservative post-processing.

\medskip
\noindent
{\bf AMS subject classification:} 76M12, 65M08, 76N99, 35L65, 35L67.

\section{Introduction}
This paper focuses on the development of numerical methods for hyperbolic systems of conservation laws, which in the two-dimensional (2-D)
case read as
\begin{equation}
\bm U_t+\bm F(\bm U)_x+\bm G(\bm U)_y=\bm0.
\label{1.1}
\end{equation}
Here, $x$ and $y$ are the spatial variables, $t$ is time, $\bm U\in\mathbb R^M$ is the vector of conserved variables,
$\bm F,\bm G\in\mathbb R^M$ are the fluxes, whose Jacobians,
$\frac{\partial\bm F}{\partial\bm U},\frac{\partial\bm G}{\partial\bm U}\in\mathbb R^{M\times M}$, are assumed to be real-diagonalizable,
and $M\in\mathbb N$ with $M\ge1$ being the number of equations in the system.

It is well-known that solutions of nonlinear hyperbolic conservation laws may become nonsmooth even when the initial and boundary data are
infinitely smooth. Therefore, the solution of \eref{1.1} has to be defined in a weak (integral) sense, and hence conservative 
finite-volume (FV) methods seem to be one of the natural choices to be considered. In these methods, the computational domain is covered 
by FV cells and the numerical solution is realized in terms of cell averages of conserved variables, which are evolved in time using an
integral form of \eref{1.1}. For a variety of existing FV methods, we refer the reader to \cite{GR3,Hes,LeV02,ToroBook} and references
therein.

In this work, we are interested in schemes which make use of different formulations of the same governing equations, namely, conservative
and nonconservative (primitive) formulations. Examples of such schemes are the active flux (AF) schemes introduced in \cite{Abg23}, in which
cell averages of conserved variables and point values of primitive ones at cell interfaces are considered. Such additional degrees of
freedom with respect to a standard FV scheme can be used not only to enhance the accuracy of the resulting scheme, but also to hybridize
conservative and nonconservative numerical methods successfully. We refer the reader to
\cite{pidatella2019semi,chertock2021hybrid,abgrall2024new,abgrall2024staggered,GBCR}, where a clever use of primitive formulations of the
governing equations was made. On the other hand, we stress that nonconservative methods per se cannot be used for accurately solving
hyperbolic systems of conservation laws since nonconservative numerical schemes typically converge to non-entropy (non-physical) weak
solutions, as demonstrated in \cite{hou1994nonconservative,abgrall2010comment}.

A similar idea of obtaining additional degrees of freedom by evolving several pieces of information was used in the methods on overlapping
cells. These methods include both FV \cite{Liu2005,LSTZ2,LSX,XLDLS} and discontinuous Galerkin \cite{LSTZ1,XLDLS,XLS1} ones. In the FV
methods on overlapping cells, hierarchical reconstruction limiters are used to achieve high-order non-oscillatory approximations of the
computed solution, whose cell averages on overlapping cells are evolved in time.

In this paper, we develop a novel second-order semi-discrete dual formulation finite-volume (DF-FV) method.
To this end, we first consider the following nonconservative system, which is equivalent to \eref{1.1} for smooth solutions:
\begin{equation}
\bm V_t+\widetilde{\bm F}(\bm V)_x+\widetilde{\bm G}(\bm V)_y=B(\bm V)\bm V_x+C(\bm V)\bm V_y,
\label{1.2}
\end{equation}
where $\bm V\in\mathbb R^M$ is the vector of primitive variables, $\widetilde{\bm F},\widetilde{\bm G}:\mathbb R^M\to\mathbb R^M$, and
$B,C\in\mathbb R^{M\times M}$. To cite an example, one may consider the Euler equations of gas dynamics, which read as \eref{1.1} with
\begin{equation}
\bm U=(\rho,\rho u,\rho v,E)^\top,~~\bm F(\bm U)=(\rho u,\rho u^2+p,\rho uv,u(E+p))^\top,~~
\bm G(\bm U)=(\rho u,\rho uv,\rho v^2+p,v(E+p))^\top,
\label{1.3f}
\end{equation}
where $\rho$ is the density, $u$ and $v$ are the $x$- and $y$-velocities, $E$ is the total energy, and $p$ is the pressure. We use the
classical closure, obtained with the help of the equation of state of ideal fluids,
\begin{equation}
E=\frac{p}{\gamma-1}+\hf\rho(u^2+v^2),
\label{1.4f}
\end{equation}
in which $\gamma$ is the specific heat ratio. The system \eref{1.1}, \eref{1.3f}--\eref{1.4f} can be rewritten in the nonconservative form
in many different ways, for instance, using the primitive variables $\rho$, $u$, $v$, and $p$, for which the corresponding nonconservative
system reads as \eref{1.2} with
\begin{equation*}
\bm V=(\rho,u,v,p)^\top,\quad\widetilde{\bm F}(\bm V)=\Big(\rho u,\frac{u^2}{2},0,pu\Big)^\top,\quad
\widetilde{\bm G}(\bm V)=\Big(\rho v,0,\frac{v^2}{2},pv\Big)^\top,
\end{equation*}
\begin{equation*}
B(\bm V)=\begin{pmatrix}0&0&0&0\\0&0&0&-\frac{1}{\rho}\\0&0&-u&0\\0&-(\gamma-1)p&0&0\end{pmatrix},\quad
C(\bm V)=\begin{pmatrix}0&0&0&0\\0&-v&0&0\\0&0&0&-\frac{1}{\rho}\\0&0&-(\gamma-1)p&0\end{pmatrix}.
\end{equation*}

We discretize the systems \eref{1.1} and \eref{1.2} on overlapping meshes using a semi-discrete approach, and then solve the resulting
systems of ordinary differential equations (ODEs) simultaneously. The numerical fluxes for the conservative system \eref{1.1} are taken
simply as $\bm F(\bm U(\bm V))$ and $\bm G(\bm U(\bm V))$, while the discretization of the nonconservative system \eref{1.2} is more
involved as its solutions cannot be understood in the sense of distributions. In \cite{dal1995definition}, a concept of Borel measure
solutions of nonconservative systems was introduced (see also \cite{LeF02,LeF}) and later utilized to develop path-conservative numerical
methods; see, e.g., \cite{CLMP,CMP2017,diaz2019path,pares2006numerical,Par2009} and references therein. Here, we discretize \eref{1.2} using
a modified version of the Riemann-problem-solver-free path-conservative central-upwind (PCCU) scheme from \cite{diaz2019path}. The
modification is intended to reduce the amount of numerical dissipation present in the original PCCU scheme and is carried out by replacing
the central-upwind (CU) numerical flux from \cite{kurganov2001semidiscrete}, which was used in \cite{diaz2019path}, with a less dissipative
CU numerical flux from \cite{KLin}.

Since the conservative numerical fluxes do not employ any limiting procedures, one can expect the computed $\bm U$ to be oscillatory. At the
same time, variables $\bm V$, computed in a non-oscillatory manner by the PCCU scheme, may converge to a non-physical weak solution. We
therefore introduce a conservative post-processing, which couples the evolution of the two sets of variables. The resulting numerical
solution is (essentially) oscillation-free and the scheme converges to the physically relevant solution of \eref{1.1}.

We test the proposed DF-FV methods on several benchmarks for one- and two-dimensional Euler equations of gas dynamics. In these examples, we
also demonstrate that the 1-D DF-FV method outperforms the second-order central scheme on overlapping cells from \cite{Liu2005}. In
addition, we would like to point out that the proposed dual formulation framework can be applied to several contexts in which the primitive
formulation is preferable over the conservative one. Applications, left for upcoming works, include development of adaptive algorithms,
asymptotic-preserving schemes for the Euler equations of gas dynamics and thermal rotating shallow water equations in all Mach and Rossby
regimes, respectively, as well as robust hybrid methods for compressible multifluid flows.

The rest of the paper is organized as follows. In \S\ref{sec2}, we introduce the new one-dimensional (1-D) DF-FV method, and then extend it
to the 2-D case in \S\ref{sec3}. In \S\ref{sec4}, we report the results of several numerical experiments for both 1-D and 2-D Euler
equations of gas dynamics. Finally, in \S\ref{sec5}, we provide concluding remarks and discuss future perspectives.

\section{One-Dimensional Semi-Discrete DF-FV Method}\label{sec2}
In this section, we introduce the new semi-discrete DF-FV method for the 1-D version of \eref{1.1}:
\begin{equation}
\bm U_t+\bm F(\bm U)_x=\bm0,
\label{2.1}
\end{equation}
whose nonconservative formulation reads as
\begin{equation}
\bm V_t+\widetilde{\bm F}(\bm V)_x=B(\bm V)\bm V_x.
\label{2.2}
\end{equation}

We consider overlapping FV meshes consisting of uniform cells $I_j=[x_\jmh,x_\jph]$, $j=1,\ldots,N$ and $I_\jph=[x_j,x_{j+1}]$,
$j=0,\ldots,N$ with $x_{j+1}=x_\jph+\dx/2=x_j+\dx$ for all $j$. As in all FV methods, the computed quantities are cell averages of $\bm U$
and $\bm V$, which are obtained on the above two grids, namely,
$$
\xbar{\bm U}_j:\approx\frac{1}{\dx}\int\limits_{I_j}\bm U(x,t)\,{\rm d}x\quad\mbox{and}\quad
\xbar{\bm V}_\jph:\approx\frac{1}{\dx}\int\limits_{I_\jph}\bm V(x,t)\,{\rm d}x.
$$
Note that both $\xbar{\bm U}_j$ and $\xbar{\bm V}_\jph$, like many other indexed quantities below, depend on time, but we omit this
dependence for the sake of notation brevity.

The semi-discretization of the conservative system \eref{2.1} is obtained by integrating \eref{2.1} in space over the cells $I_j$, which
results in
\begin{equation}
\frac{\rm d}{{\rm d}t}\,\xbar{\bm U}_j=-\frac{1}{\dx}\Big[\bm{{\cal F}}_\jph-\bm{{\cal F}}_\jmh\Big],
\label{2.3}
\end{equation}
in which we take the following simple numerical fluxes:
\begin{equation}
\bm{{\cal F}}_\jph=\bm F\big(\bm U_\jph\big),\quad\bm U_\jph:=\bm U\big(\,\xbar{\bm V}_\jph\big),
\label{2.4}
\end{equation}
where $\bm U(\bm V)$ denotes the transformation from primitive to conserved variables. Note that for second-order methods, cell averages and
point values formally differ by ${\cal O}((\dx)^2)$, which makes the transformation used in \eref{2.4} straightforward, while higher order
extensions would require a suitable higher order reconstruction of point values.

The semi-discretization of the nonconservative system \eref{2.2} is obtained through the modified PCCU scheme and reads
\begin{equation}
\frac{\rm d}{{\rm d}t}\Vbar_\jph=-\frac{1}{\dx}\bigg[\widetilde{\bm{{\cal F}}}_{j+1}-\widetilde{\bm{{\cal F}}}_j-\bm B_\jph
-\frac{a^+_j}{a^+_j-a^-_j}\bm B_{\bm\Psi,j}+\frac{a^-_{j+1}}{a^+_{j+1}-a^-_{j+1}}\bm B_{\bm\Psi,j+1}\bigg],
\label{2.5}
\end{equation}
where $\widetilde{\bm{{\cal F}}}_j$ are the CU numerical fluxes from \cite{KLin} given by
\begin{equation}
\widetilde{\bm{{\cal F}}}_j=\frac{a^+_j\widetilde{\bm F}\big(\bm V^-_j\big)-a^-_j\widetilde{\bm F}\big(\bm V^+_j\big)}{a^+_j-a^-_j}+
\frac{a^+_ja^-_j}{a^+_j-a^-_j}\Big(\bm V^+_j-\bm V^-_j-\delta\bm V_j\Big).
\label{2.6}
\end{equation}
Here, $\bm V^\pm_j$ are one-sided point values of $\bm V$ obtained using a piecewise linear reconstruction applied to the local
characteristic variables $\bm\Gamma$ of \eref{1.2}. To this end, we follow \cite{micalizzitoro2024,micalizzi2025algorithms} and introduce
$$
\bm\Gamma_\jmh=Q_\jph^{-1}\xbar{\bm V}_\jmh,\quad\bm\Gamma_\jph=Q_\jph^{-1}\xbar{\bm V}_\jph,\quad
\bm\Gamma_{j+\frac{3}{2}}=Q_\jph^{-1}\xbar{\bm V}_{j+\frac{3}{2}},
$$
where $Q_\jph$ is a matrix such that $Q_\jph^{-1}{\cal A}_\jph Q_\jph$ is a diagonal matrix and
${\cal A}_\jph={\cal A}\big(\xbar{\bm V}_\jph\big):=\frac{\partial\widetilde{\bm F}}{\partial\bm V}\big(\xbar{\bm V}_\jph\big)-
B\big(\xbar{\bm V}_\jph\big)$. Equipped with $\bm\Gamma_\jmh$, $\bm\Gamma_\jph$, and $\bm\Gamma_{j+\frac{3}{2}}$, we apply a generalized
minmod limiter (see, e.g., \cite{LN,sweby1984high}) to evaluate
$$
(\bm\Gamma_x)_\jph={\rm minmod}\left(\theta\,\frac{\bm\Gamma_\jph-\bm\Gamma_\jmh}{\dx},
\frac{\bm\Gamma_{j+\frac{3}{2}}-\bm\Gamma_\jmh}{2\dx},\,\theta\,\frac{\bm\Gamma_{j+\frac{3}{2}}-\bm\Gamma_\jph}{\dx}\right),\quad
\theta\in[1,2],
$$
where the minmod function, defined as
\begin{equation*}
{\rm minmod}(c_1,c_2,\ldots)=
\left\{\begin{aligned}
&\min_ic_i&&\mbox{if}~c_i>0~~\forall i,\\
&\max_ic_i&&\mbox{if}~c_i<0~~\forall i,\\
&0&&\mbox{otherwise},
\end{aligned}\right.
\end{equation*}
is applied in a component-wise manner. We then obtain
$$
\bm\Gamma_j^+=\bm\Gamma_\jph-\frac{\dx}{2}(\bm\Gamma_x)_\jph,\quad\bm\Gamma_{j+1}^-=\bm\Gamma_\jph+\frac{\dx}{2}(\bm\Gamma_x)_\jph,
$$
and hence the corresponding point values of $\bm V$ are
$$
\bm V^+_j=Q_\jph\bm\Gamma_j^+,\quad\bm V^-_{j+1}=Q_\jph\bm\Gamma_{j+1}^-.
$$
In \eref{2.5} and \eref{2.6}, $a^\pm_j$ are the one-sided local speeds of propagation, which can be estimated by
\begin{equation*}
a^-_j=\min\left\{\lambda_1(\bm V^-_j),\lambda_1(\bm V^+_j),0\right\},\quad
a^+_j=\max\left\{\lambda_M(\bm V^-_j),\lambda_M(\bm V^+_j),0\right\}.
\end{equation*}
where $\lambda_1(\bm V)\le\ldots\le\lambda_M(\bm V)$ are the eigenvalues of ${\cal A}(\bm V)$. The term $\delta\bm V_j$ in \eref{2.6}
represents a ``built-in'' anti-diffusion and is given by (see \cite{KLin}),
\begin{equation*}
\delta\bm V_j={\rm minmod}\left(\bm V_j^*-\bm V^-_j,\bm V^+_j-\bm V_j^*\right),\quad
\bm V_j^*:=\frac{a^+_j\bm V^+_j-a^-_j\bm V^-_j-\widetilde{\bm F}(\bm V^+_j)+\widetilde{\bm F}(\bm V^-_j)}{a^+_j-a^-_j}.
\end{equation*}
Finally, the terms $\bm B_\jph$ and $\bm B_{\bm\Psi,j}$ in \eref{2.5} are given by
\begin{equation*}
\bm B_\jph=B\big(\,\xbar{\bm V}_\jph\big)\big(\bm V_{j+1}^--\bm V_j^+\big),\quad
\bm B_{\bm\Psi,j}=\hf\left[B(\bm V_j^-)+B\big(\bm V_j^+\big)\right]\big(\bm V_j^+-\bm V_j^-\big),
\end{equation*}
where the former term is obtained by applying a second-order quadrature to $\int_{I_j}B(\bm V)\bm V\,{\rm d}x$ and the latter one is
derived using a linear path connecting $\bm V_j^-$ with $\bm V_j^+$; see \cite{diaz2019path} for details.
\begin{remark}
It should be observed that the numerical flux \eref{2.6} is different from the one used in the original version of the PCCU scheme
introduced in \cite{diaz2019path}. The difference is attributed to the presence of the anti-diffusion term $\delta\bm V_j$ that helps to
reduce the numerical dissipation present in the PCCU scheme and thus to enhance the resolution of contact waves as it was demonstrated in
\cite{KLin}, where the conservative formulation of the compressible Euler equations was considered.
\end{remark}
\begin{remark}
We would like to stress that the proposed DF-FV method is not tied to the PCCU scheme and, in principle, one can numerically solve the
nonconservative system \eref{2.2} using an alternative second-order stable numerical method instead. However, the PCCU scheme seems to be a
reasonable choice, thanks to its distinctive feature: once the path has been selected, the resulting method is not sensitive to a particular
choice on the nonconservative formulation; see \cite{diaz2019path}.
\end{remark}

The ODE systems \eref{2.3} and \eref{2.5} should be numerically solved by a stable and sufficiently accurate ODE solver. However, the
solution obtained upon completion of a time step evolution will have two significant drawbacks. First, the solution realized by
$\big\{\,\xbar{\bm U}_j(t+\dt)\big\}$ will likely be oscillatory since no limiting procedures are employed in the computation of numerical
fluxes in \eref{2.4}. Second, the solution realized by $\big\{\xbar{\bm V}_\jph(t+\dt)\big\}$ will not necessarily be conservative, that 
is, $\sum_j\bm U\big(\xbar{\bm V}_\jph(t+\dt)\big)$ may not be equal to $\sum_j\bm U\big(\xbar{\bm V}_\jph(t)\big)$. Therefore, to
ensure that the resulting numerical solution is (essentially) oscillation-free and converges to the physically relevant solution of
\eref{2.1}, we propose the conservative post-processing procedure presented in the next subsection.

\subsection{Post-Processing}\label{sec21}
Let us assume that the solution was evolved from time $t$ to $t+\dt$ with the help of an ODE solver, and denote the obtained solutions by
$\big\{\,\xbar{\bm U}_j^{\,*}\big\}$ and $\big\{\xbar{\bm V}_\jph^{\,*}\big\}$. Our goal is to modify these values through a suitable
post-processing procedure to obtain non-oscillatory sets of $\big\{\,\xbar{\bm U}_j(t+\dt)\big\}$ and
$\big\{\xbar{\bm V}_\jph(t+\dt)\big\}$.

The proposed post-processing can be presented algorithmically through the following four steps.

\vskip3pt
\noindent
{\bf Step 1.} Compute the conserved variables at $x=x_\jph$ using the transformation from the primitive ones:
$$
\bm U_\jph^*:=\bm U\big(\xbar{\bm V}_\jph^{\,*}\big).
$$

\vskip3pt
\noindent
{\bf Step 2.} Perform the piecewise linear reconstruction for $\bm U$ using the slopes computed by the minmod limiter
$$
(\bm U_x)_j^*=2\,{\rm minmod}\left(\frac{\,\xbar{\bm U}_j^{\,*}-\bm U_\jmh^*}{\dx},\frac{\bm U_\jph^*-\,\xbar{\bm U}_j^{\,*}}{\dx}\right),
$$
which results in
\begin{equation}
\bm U_\jph^{*,-}:=\,\xbar{\bm U}_j^{\,*}+\frac{\dx}{2}(\bm U_x)_j^*,\quad
\bm U_\jph^{*,+}:=\,\xbar{\bm U}_{j+1}^{\,*}-\frac{\dx}{2}(\bm U_x)_{j+1}^*.
\label{2.7}
\end{equation}

\vskip3pt
\noindent
{\bf Step 3.} Set
\begin{equation}
\bm U_\jph^{**}:=\hf\Big(\bm U_\jph^{*,-}+\bm U_\jph^{*,+}\Big),
\label{2.8}
\end{equation}
and recompute the primitive variables at $x=x_\jph$ using the transformation from the conservative ones:
$$
\xbar{\bm V}_\jph(t+\dt)=\bm V\big(\bm U_\jph^{**}\big).
$$

\vskip3pt
\noindent
{\bf Step 4.} Correct the conserved variables by setting
\begin{equation}
\xbar{\bm U}_j(t+\dt)=\hf\Big(\bm U_\jmh^{**}+\bm U_\jph^{**}\Big).
\label{2.9}
\end{equation}

\vskip7pt
It is essential to emphasize that the post-processing is conservative, as demonstrated in the following proposition.
\begin{proposition}
The post-processed cell averages of the conserved variables satisfy
\begin{equation*}
\sum_j\,\xbar{\bm U}_j(t+\dt)=\sum_j\,\xbar{\bm U}_j^{\,*}.
\end{equation*}
\end{proposition}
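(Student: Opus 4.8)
The plan is a direct computation: substitute the definitions from Steps~1--4 and exploit telescoping of the resulting finite sums. First I would start from \eref{2.9} and sum over all cells,
$$
\sum_j\,\xbar{\bm U}_j(t+\dt)=\hf\sum_j\Big(\bm U_\jmh^{**}+\bm U_\jph^{**}\Big).
$$
Reindexing the first sum (shifting $j\mapsto j+1$) and using periodic boundary conditions — or, more generally, assuming the boundary data are arranged so that the contributions of the two extreme interface values coincide — this collapses to $\sum_j\bm U_\jph^{**}$, i.e.\ a single sum over the interface quantities $\bm U_\jph^{**}$.

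Next I would insert \eref{2.8} and then \eref{2.7} to express each interface value in terms of the starred cell averages and the reconstructed slopes,
$$
\bm U_\jph^{**}=\hf\Big(\,\xbar{\bm U}_j^{\,*}+\,\xbar{\bm U}_{j+1}^{\,*}\Big)+\frac{\dx}{4}\Big((\bm U_x)_j^*-(\bm U_x)_{j+1}^*\Big).
$$
Summing over $j$, the slope-dependent contribution telescopes and vanishes (again under periodicity), while the two cell-average sums, after a shift of index, are each equal to $\sum_j\,\xbar{\bm U}_j^{\,*}$. Hence $\sum_j\bm U_\jph^{**}=\sum_j\,\xbar{\bm U}_j^{\,*}$, which combined with the previous step yields the assertion.

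The two observations that make this work are: (i) the conserved-variable update \eref{2.9} depends on the primitive data only through the intermediate quantities $\bm U_\jph^{**}$, so the primitive$\,\leftrightarrow\,$conservative transformations in Steps~1 and~3 are irrelevant for the conservation balance; and (ii) the averaging in \eref{2.8}, combined with the symmetric placement of the one-sided reconstructed values in \eref{2.7}, forces the slope terms to enter with opposite signs at neighbouring interfaces, so that they cancel in the global sum. I do not anticipate a genuine obstacle; the only point needing care is the boundary, where one must either invoke periodicity or specify that the boundary extrapolation is consistent so that the telescoped interface and slope terms cancel. Everything else is elementary reindexing of finite sums.
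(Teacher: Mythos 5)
Your proposal is correct and follows essentially the same route as the paper's proof: sum \eref{2.9}, reindex to collapse onto $\sum_j\bm U_\jph^{**}$, then substitute \eref{2.8} and \eref{2.7} so that the slope terms cancel by telescoping, with boundary contributions neglected (the paper states this as ``assuming no contributions from the boundary terms,'' matching your periodicity/consistency caveat). No differences of substance.
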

\begin{proof}
The proof consists of simple direct computations (assuming no contributions from the boundary terms):
$$
\begin{aligned}
\sum_j\,\xbar{\bm U}_j(t+\dt)&\stackrel{\eref{2.9}}{=}\hf\sum_j\bm U_\jmh^{**}+\hf\sum_j\bm U_\jph^{**}=\sum_j\bm U_\jph^{**}
\stackrel{\eref{2.8}}{=}\hf\sum_j\bm U_\jph^{*,-}+\hf\sum_j\bm U_\jph^{*,+}\\
&\stackrel{\eref{2.7}}{=}\hf\sum_j\Big[\,\xbar{\bm U}_j^{\,*}+\frac{\dx}{2}(\bm U_x)_j^*\Big]+
\hf\sum_j\Big[\,\xbar{\bm U}_{j+1}^{\,*}-\frac{\dx}{2}(\bm U_x)_{j+1}^*\Big]=\sum_j\,\xbar{\bm U}_j^{\,*}.
\end{aligned}
$$
\end{proof}

\subsection{Linear Stability Analysis}
It is instructive to study a linear $L^\infty$ stability of the developed DF-FV method. To this end, we apply the scheme to the linear 
advection equation
\begin{equation}
U_t+aU_x=0,
\label{2.10}
\end{equation}
where $a$ is a positive constant. In this case, the conservative and primitive formulations coincide, and nonconservative products are not
present.

For the sake of simplicity, we apply the first-order forward Euler time discretization, for which the fully discrete DF-FV scheme can be
written as follows. Starting from the discrete solution ${\xbar U_j(t), \xbar U_\jph(t)}$, we first reconstruct the point values 
${U_j^\pm(t)}$ from ${\xbar U_\jph(t)}$ using a generalized minmod limiter. We then evolve both $\xbar U_j$ and $\xbar U_\jph$ in time to 
obtain
\begin{align}
&\xbar U^{\,*}_\jph=\,\xbar U_\jph(t)-\frac{a\dt}{\dx}\left[U^-_{j+1}(t)-U^-_j(t)\right],\label{2.11}\\
&\xbar U^{\,*}_j=\,\xbar U_j(t)-\frac{a\dt}{\dx}\left[U_\jph(t)-U_\jmh(t)\right],\label{2.12}
\end{align}
where \eref{2.11} is, in fact, a second-order (in space) upwind scheme, which the PCCU scheme reduces to when applied to the linear
advection \eref{2.10} with $a>0$. Next, we apply the post-processing from \S\ref{sec21}. Namely, we compute the reconstructed interface 
values $U^{*,\pm}_\jph$ using \eref{2.7} and evaluate
\begin{equation}
\xbar U_\jph(t+\dt)=\hf\left[U^{*,-}_\jph+U^{*,+}_\jph\right]
\label{2.13}
\end{equation}
and
\begin{equation}
\xbar U_j(t+\dt)=\hf\left[\,\xbar U_\jmh(t+\dt)+\,\xbar U_\jph(t+\dt)\right].
\label{2.14}
\end{equation}

We now assume the bounds $\|\,\xbar U_\jph(t)\|_\infty\le K$ and $\|\,\xbar U_j(t)\|_\infty\le K$ and prove that the updated values 
$\|\,\xbar U_\jph(t+\dt)\|_\infty$ and $\|\,\xbar U_j(t+\dt)\|_\infty$ remain bounded by $K$. First, we observe that \eref{2.11} can be 
rewritten as
$$
\xbar U^{\,*}_\jph=\frac{U^+_j(t)+U^-_{j+1}(t)}{2}-\frac{a\dt}{\dx}\left[U^-_{j+1}(t)-U^-_j(t)\right]
=\left(\hf-\frac{a\dt}{\dx}\right)U^-_{j+1}(t)+\hf U^+_j(t)+\frac{a\dt}{\dx}U^-_j(t).
$$
Imposing the CFL condition
\begin{equation}
\frac{a\dt}{\dx}\le\hf
\label{2.15}
\end{equation}
and using the non-oscillatory nature of the generalized minmod reconstruction that ensures $|U^\pm_j|\le K$, we obtain
\begin{equation}
\big|\,\xbar U^{\,*}_\jph\big|\le\left(\hf-\frac{a\dt}{\dx}\right)|U^-_{j+1}(t)|+\hf|U^+_j(t)|+\frac{a\dt}{\dx}|U^-_j(t)|
\le\left(\hf-\frac{a\dt}{\dx}+\hf+\frac{a\dt}{\dx}\right)K=K.
\label{2.16}
\end{equation}
Next, \eref{2.12} together with \eref{2.14}, written at the previous time level $t$ rather than at $t+\dt$, yields
$$
\xbar U^{\,*}_j=\frac{\,\xbar U_\jph(t)+\,\xbar U_\jmh(t)}{2}-\frac{a\dt}{\dx}\left[\,\xbar U_\jph(t)-\,\xbar U_\jmh(t)\right]
=\left(\hf-\frac{a\dt}{\dx}\right)\,\xbar U_\jph(t)+\left(\hf+\frac{a\dt}{\dx}\right)\,\xbar U_\jmh(t),
$$
which, under the same CFL restriction \eref{2.15}, leads to
\begin{equation}
\big|\,\xbar U^{\,*}_j\big|\le\left(\hf-\frac{a\dt}{\dx}\right)\big|\,\xbar U_\jph(t)\big|+
\left(\hf+\frac{a\dt}{\dx}\right)\big|\,\xbar U_\jmh(t)\big|\le\left(\hf-\frac{a\dt}{\dx}+\hf+\frac{a\dt}{\dx}\right)K=K.
\label{2.17}
\end{equation}
Finally, combining equations \eref{2.13}–\eref{2.14} with the bounds \eref{2.16}–\eref{2.17}, and noting once again that the generalized 
minmod reconstruction is non-oscillatory and satisfies $\big|U^{*,\pm}_\jph\big|\le K$, we conclude that both 
$\xbar U_\jph(t+\dt)$ and $\xbar U_j(t+\dt)$ remain bounded by $K$. This establishes the $L^\infty$ stability of the DF–FV method for 
linear problems \eqref{2.10} under the CFL condition \eref{2.15}.

\section{Two-Dimensional Semi-Discrete DF-FV Method}\label{sec3}
We now extend the proposed semi-discrete DF-FV method to the 2-D case, in which the conservative and nonconservative (primitive)
formulations of the governing equations are given by \eref{1.1} and \eref{1.2}, respectively. 

We consider overlapping Cartesian meshes (see the sketch in Figure \ref{fig31}) consisting of uniform cells
$I_{j,k}:=[x_\jmh,x_\jph]\times[y_\kmh,y_\kph]$, $j=1,\ldots,N_x$, $k=1,\ldots,N_y$, $I_{\jph,k}^x=[x_j,x_{j+1}]\times[y_\kmh,y_\kph]$,
$j=0,\ldots,N_x$, $k=1,\ldots,N_y$, and $I_{j,\kph}^y=[x_\jmh,x_\jph]\times[y_k,y_{k+1}]$, $j=1,\ldots,N_x$, $k=0,\ldots,N_y$ with
$x_{j+1}=x_\jph+\dx/2=x_j+\dx$ for all $j$ and $y_{k+1}=y_\kph+\dy/2=y_k+\dy$ for all $k$. The computed quantities are the following cell
averages obtained on the above three grids:
$$
\xbar{\bm U}_{j,k}:\approx\frac{1}{\dx\dy}\int\limits_{I_{j,k}}\bm U(x,t)\,{\rm d}x{\rm d}y,~~
\xbar{\bm V}^{\,x}_{\jph,k}:\approx\frac{1}{\dx\dy}\int\limits_{I_{\jph,k}^x}\bm V(x,t)\,{\rm d}x{\rm d}y,~~
\xbar{\bm V}^{\,y}_{j,\kph}:\approx\frac{1}{\dx\dy}\int\limits_{I_{j,\kph}^y}\bm V(x,t)\,{\rm d}x{\rm d}y.
$$
\begin{figure}[ht!]
\vskip-40pt
\centering
\scalebox{0.9}{
\begin{tikzpicture}
\def\s{2}
\begin{scope}[shift={(-0.5,0)}]  
\draw[->] (0,0) -- (7,0) node[below] {$x$};
\draw[->] (0,0) -- (0,6) node[left] {$y$};
\foreach\x/\lab in {1.5/{$x_{j-1}$},2.5/{$x_\jmh$},3.5/{$x_j$},4.5/{$x_\jph$},5.5/{$x_{j+1}$}}{\draw (\x,0.05) -- (\x,-0.05);
\node[below] at (\x,-0.05){\lab};}
\foreach \y/\lab in {1/{$y_{k-1}$},2/{$y_\kmh$},3/{$y_k$},4/{$y_\kph$},5/{$y_{k+1}$}}{\draw (0.05,\y) -- (-0.05,\y);
\node[left] at (-0.05,\y){\lab};}
\end{scope}
\draw[ultra thick] (\s,\s) rectangle (2*\s,2*\s);
\fill[pattern={Lines[angle=45, distance=4pt, line width=0.4pt]}, pattern color=red] (0.5*\s,\s) rectangle (1.5*\s,2*\s);
\draw[red] (0.5*\s,\s) rectangle (1.5*\s,2*\s);
\fill[pattern={Lines[angle=45, distance=4pt, line width=0.4pt]}, pattern color=red] (1.5*\s,\s) rectangle (2.5*\s,2*\s);
\draw[red] (1.5*\s,\s) rectangle (2.5*\s,2*\s);
\fill[pattern={Lines[angle=135, distance=4pt, line width=0.4pt]}, pattern color=blue] (\s,0.5*\s) rectangle (2*\s,1.5*\s);
\draw[blue] (\s,0.5*\s) rectangle (2*\s,1.5*\s);
\draw[blue] (\s,0.5*\s) rectangle (2*\s,1.5*\s);
\fill[pattern={Lines[angle=135, distance=4pt, line width=0.4pt]}, pattern color=blue] (\s,1.5*\s) rectangle (2*\s,2.5*\s);
\draw[blue] (\s,1.5*\s) rectangle (2*\s,2.5*\s);
\end{tikzpicture}}
\caption{\sf Sketch of the overlapping cells: $I_{j,k}$ (black-bordered); $I_{\jmh,k}^x$ and $I_{\jph,k}^x$ (red-filled); $I_{j,\kmh}^y$
and $I_{j,\kph}^y$ (blue-filled).
\label{fig31}}
\end{figure}

As in the 1-D case, the semi-discretization of the conservative system \eref{1.1} is obtained by integrating \eref{1.1} in space over the
cells $I_{j,k}$, which leads to
\begin{equation*}
\frac{\rm d}{{\rm d}t}\,\xbar{\bm U}_{j,k}=-\frac{1}{\dx}\Big[\bm{{\cal F}}_{\jph,k}-\bm{{\cal F}}_{\jmh,k}\Big]-
\frac{1}{\dy}\Big[\bm{{\cal G}}_{j,\kph}-\bm{{\cal G}}_{j,\kmh}\Big],
\end{equation*}
with the following second-order numerical fluxes:
$$
\bm{{\cal F}}_{\jph,k}=\bm F\big(\bm U_{\jph,k}\big),~~\bm U_{\jph,k}:=\bm U\big(\,\xbar{\bm V}^{\,x}_{\jph,k}\big)\quad\mbox{and}\quad
\bm{{\cal G}}_{j,\kph}=\bm G\big(\bm U_{j,\kph}\big),~~\bm U_{j,\kph}:=\bm U\big(\,\xbar{\bm V}^{\,y}_{j,\kph}\big).
$$

Two {\em independent} semi-discretizations of the nonconservative system \eref{1.2} are obtained through the modified 2-D PCCU scheme
applied on the $I_{\jph,k}^x$ and $I_{j,\kph}^y$ meshes, respectively, as follows:
\allowdisplaybreaks
\begin{align}
&\begin{aligned}
\frac{\rm d}{{\rm d}t}\,\xbar{\bm V}^{\,x}_{\jph,k}=&-\frac{1}{\dx}\bigg[\widetilde{\bm{{\cal F}}}^{\,x}_{j+1,k}-
\widetilde{\bm{{\cal F}}}^{\,x}_{j,k}-\bm B^x_{\jph,k}-\frac{a^{x,+}_{j,k}}{a^{x,+}_{j,k}-a^{x,-}_{j,k}}\,\bm B^x_{\bm\Psi,j,k}+
\frac{a^{x,-}_{j+1,k}}{a^{x,+}_{j+1,k}-a^{x,-}_{j+1,k}}\,\bm B^x_{\bm\Psi,j+1,k}\bigg]\\
&-\frac{1}{\dy}\bigg[\widetilde{\bm{{\cal G}}}^{\,x}_{\jph,\kph}-\widetilde{\bm{{\cal G}}}^{\,x}_{\jph,\kmh}-\bm C^x_{\jph,k}\\
&\hspace{1.6cm}-\frac{b^{x,+}_{\jph,\kmh}}{b^{x,+}_{\jph,\kmh}-b^{x,-}_{\jph,\kmh}}\,\bm C^x_{\bm\Psi,\jph,\kmh}+
\frac{b^{x,-}_{\jph,\kph}}{b^{x,+}_{\jph,\kph}-b^{x,-}_{\jph,\kph}}\,\bm C^x_{\bm\Psi,\jph,\kph}\bigg]
\end{aligned}
\label{3.1}\\
&\begin{aligned}
\frac{\rm d}{{\rm d}t}\,\xbar{\bm V}^{\,y}_{j,\kph}=&-\frac{1}{\dx}\bigg[\widetilde{\bm{{\cal F}}}^{\,y}_{\jph,\kph}-
\widetilde{\bm{{\cal F}}}^{\,y}_{\jmh,\kph}-\bm B^y_{j,\kph}\\
&\hspace{1.25cm}-\frac{a^{y,+}_{\jmh,\kph}}{a^{y,+}_{\jmh,\kph}-a^{y,-}_{\jmh,\kph}}\,\bm B_{\bm\Psi,\jmh,\kph}^y+
\frac{a^{y,-}_{\jph,\kph}}{a^{y,+}_{\jph,\kph}-a^{y,-}_{\jph,\kph}}\,\bm B^y_{\bm\Psi,\jph,\kph}\bigg]\\
&-\frac{1}{\dy}\bigg[\widetilde{\bm{{\cal G}}}^{\,y}_{j,k+1}-\widetilde{\bm{{\cal G}}}^{\,y}_{j,k}-\bm C^y_{j,\kph}
-\frac{b^{y,+}_{j,k}}{b^{y,+}_{j,k}-b^{y,-}_{j,k}}\,\bm C^y_{\bm\Psi,j,k}+
\frac{b^{y,-}_{j,k+1}}{b^{y,+}_{j,k+1}-b^{y,-}_{j,k+1}}\,\bm C^y_{\bm\Psi,j,k+1}\bigg]
\end{aligned}
\label{3.2}
\end{align}
Once again, we stress that the right-hand sides (RHSs) of \eref{3.1} and \eref{3.2} are computed independently and thus their computations
can be performed in parallel.

The terms on the RHS of \eref{3.1} are computed in a dimension-by-dimension manner and are given by
\allowdisplaybreaks
\begin{align*}
&\widetilde{\bm{{\cal F}}}^{\,x}_{j,k}=\frac{a^{x,+}_{j,k}\widetilde{\bm F}\big(\bm V^{x,-}_{j,k}\big)-
a^{x,-}_{j,k}\widetilde{\bm F}\big(\bm V^{x,+}_{j,k}\big)}{a^{x,+}_{j,k}-a^{x,-}_{j,k}}+
\frac{a^{x,+}_{j,k}a^{x,-}_{j,k}}{a^{x,+}_{j,k}-a^{x,-}_{j,k}a\,}\Big(\bm V^{x,+}_{j,k}-\bm V^{x,-}_{j,k}-\delta\bm V^x_{j,k}\Big),\\
&\delta\bm V^x_{j,k}={\rm minmod}\left(\bm V_{j,k}^{x,*}-\bm V^{x,-}_{j,k},\bm V^{x,+}_{j,k}-\bm V_{j,k}^{x,*}\right),\quad
\bm V_{j,k}^{x,*}=\frac{a^{x,+}_{j,k}\bm V^{x,+}_{j,k}-a^{x,-}_{j,k}\bm V^{x,-}_{j,k}-\widetilde{\bm F}(\bm V^{x,+}_{j,k})+
\widetilde{\bm F}(\bm V^{x,-}_{j,k})}{a^{x,+}_{j,k}-a^{x,-}_{j,k}},\\
&\widetilde{\bm{{\cal G}}}^x_{\jph,\kph}=\frac{b^{x,+}_{\jph,\kph}\widetilde{\bm G}\big(\bm V^{x,-}_{\jph,\kph}\big)-
b^{x,-}_{\jph,\kph}\widetilde{\bm G}\big(\bm V^{x,+}_{\jph,\kph}\big)}{b^{x,+}_{\jph,\kph}-b^{x,-}_{\jph,\kph}}\\
&\hspace{1.4cm}+\frac{b^{x,+}_{\jph,\kph}b^{x,-}_{\jph,\kph}}{b^{x,+}_{\jph,\kph}-b^{x,-}_{\jph,\kph}}\,
\Big(\bm V^{x,+}_{\jph,\kph}-\bm V^{x,-}_{\jph,\kph}-\delta\bm V^x_{\jph,\kph}\Big),\\
&\delta\bm V^x_{\jph,\kph}={\rm minmod}\Big(\bm V_{\jph,\kph}^{x,*}-\bm V^{x,-}_{\jph,\kph},
\bm V^{x,+}_{\jph,\kph}-\bm V_{\jph,\kph}^{x,*}\Big),\\
&\bm V_{\jph,\kph}^{x,*}=\frac{b^{x,+}_{\jph,\kph}\bm V^{x,+}_{\jph,\kph}-b^{x,-}_{\jph,\kph}\bm V^{x,-}_{\jph,\kph}-
\widetilde{\bm G}\big(\bm V^{x,+}_{\jph,\kph}\big)+\widetilde{\bm G}\big(\bm V^{x,-}_{\jph,\kph}\big)}
{b^{x,+}_{\jph,\kph}-b^{x,-}_{\jph,\kph}},\\
&\bm B^x_{\jph,k}=B\big(\xbar{\bm V}^x_{\jph,k}\big)\big(\bm V_{j+1,k}^{x,-}-\bm V_{j,k}^{x,+}\big),\\
&\bm B^x_{\bm\Psi,j,k}=\hf\Big[B(\bm V_{j,k}^{x,-})+B\big(\bm V_{j,k}^{x,+}\big)\Big]
\big(\bm V_{j,k}^{x,+}-\bm V_{j,k}^{x,-}\big),\\
&C^x_{\jph,k}=C\big(\xbar{\bm V}^x_{\jph,k}\big)\big(\bm V_{\jph,\kph}^{x,-}-\bm V_{\jph,\kmh}^{x,+}\big),\\
&\bm C^x_{\bm\Psi,\jph,\kph}=\hf\Big[C(\bm V_{\jph,\kph}^{x,-})+C\big(\bm V_{\jph,\kph}^{x,+}\big)\Big]
\big(\bm V_{\jph,\kph}^{x,+}-\bm V_{\jph,\kph}^{x,-}\big),
\end{align*}
where $\bm V^{x,\pm}_{j,k}$ and $\bm V^{x,\pm}_{\jph,\kph}$ are reconstructed point values obtained from the cell averages
$\big\{\xbar{\bm V}^{\,x}_{j,k}\big\}$. As in the 1-D case, we compute these values by exploiting the local characteristic decomposition. 
In the $x$-direction, we have
$$
\bm V^{x,+}_{j,k}=Q^x_{\jph,k}\bm\Gamma_{j,k}^{x,+},\quad\bm V^{x,-}_{j+1,k}=Q^x_{\jph,k}\bm\Gamma_{j+1,k}^{x,-},
$$
where
$$
\bm\Gamma_{j,k}^{x,+}=\bm\Gamma^x_{\jph,k}-\frac{\dx}{2}(\bm\Gamma_x)^x_{\jph,k},\quad
\bm\Gamma_{j+1,k}^{x,-}=\bm\Gamma^x_{\jph,k}+\frac{\dx}{2}(\bm\Gamma_x)^x_{\jph,k}
$$
with the slopes computed using the generalized minmod limiter:
\begin{equation}
(\bm\Gamma_x)^x_{\jph,k}={\rm minmod}\left(\theta\,\frac{\bm\Gamma^x_{\jph,k}-\bm\Gamma^x_{\jmh,k}}{\dx},
\frac{\bm\Gamma^x_{j+\frac{3}{2},k}-\bm\Gamma^x_{\jmh,k}}{2\dx},
\,\theta\,\frac{\bm\Gamma^x_{j+\frac{3}{2},k}-\bm\Gamma^x_{\jph,k}}{\dx}\right),\quad\theta\in[1,2].
\label{3.3}
\end{equation}
In \eref{3.3}, one has
$$
\bm\Gamma^x_{\jmh,k}=\big(Q^x_{\jph,k}\big)^{-1}\,\xbar{\bm V}^{\,x}_{\jmh,k},\quad
\bm\Gamma^x_{\jph,k}=\big(Q_{\jph,k}^x\big)^{-1}\,\xbar{\bm V}^{\,x}_{\jph,k},\quad
\bm\Gamma^x_{j+\frac{3}{2},k}=\big(Q_{\jph,k}^x\big)^{-1}\,\xbar{\bm V}^{\,x}_{j+\frac{3}{2},k},
$$
where the matrix $Q^x_{\jph,k}$ is such that $\big(Q^x_{\jph,k}\big)^{-1}{\cal A}^x_{\jph,k}Q^x_{\jph,k}$ is diagonal with
${\cal A}^x_{\jph,k}={\cal A}\big(\xbar{\bm V}^{\,x}_{\jph,k}\big):=
\frac{\partial\widetilde{\bm F}}{\partial\bm V}\big(\xbar{\bm V}^{\,x}_{\jph,k}\big)-B\big(\xbar{\bm V}^{\,x}_{\jph,k}\big)$. 

Similarly, in the $y$-direction, we have
$$
\bm V^{x,+}_{\jph,\kmh}=P^x_{\jph,k}\bm\Gamma_{\jph,\kmh}^{x,+},\quad\bm V^{x,-}_{\jph,\kph}=P^x_{\jph,k}\bm\Gamma_{\jph,\kph}^{x,-},
$$
where
$$
\bm\Gamma_{\jph,\kmh}^{x,+}=\bm\Gamma^x_{\jph,k}-\frac{\dy}{2}(\bm\Gamma_y)^x_{\jph,k},
\quad\bm\Gamma_{\jph,\kph}^{x,-}=\bm\Gamma_{\jph,k}^x+\frac{\dy}{2}(\bm\Gamma_y)^x_{\jph,k}
$$
with the slope given by
\begin{equation}
(\bm\Gamma_y)^x_{\jph,k}={\rm minmod}\left(\theta\,\frac{\bm\Gamma^x_{\jph,k}-\bm\Gamma^x_{\jph,k-1}}{\dy},
\frac{\bm\Gamma^x_{\jph,k+1}-\bm\Gamma^x_{\jph,k-1}}{2\dy},\,\theta\,\frac{\bm\Gamma^x_{\jph,k+1}-\bm\Gamma^x_{\jph,k}}{\dy}\right),\quad
\theta\in[1,2].
\label{3.4}
\end{equation}
In \eref{3.4}, one has
$$
\bm\Gamma^x_{\jph,k-1}=(P_{\jph,k}^x)^{-1}\xbar{\bm V}^{\,x}_{\jph,k-1},\quad
\bm\Gamma^x_{\jph,k}=(P^x_{\jph,k})^{-1}\xbar{\bm V}^{\,x}_{\jph,k},\quad
\bm\Gamma^x_{\jph,k+1}=(P_{\jph,k}^x)^{-1}\xbar{\bm V}^{\,x}_{\jph,k+1},
$$
where the matrix $P^x_{\jph,k}$ is such that $(P^x_{\jph,k})^{-1}{\cal B}^x_{\jph,k} P^x_{\jph,k}$ is diagonal with
${\cal B}^x_{\jph,k}={\cal B}\big(\xbar{\bm V}_{\jph,k}^{\,x}\big):=
\frac{\partial\widetilde{\bm G}}{\partial\bm V}\big(\xbar{\bm V}^{\,x}_{\jph,k}\big)-C\big(\xbar{\bm V}^{\,x}_{\jph,k}\big)$. 

The one-sided local speeds of propagation in \eref{3.1} are estimated by
\begin{equation*}
\begin{aligned}
&a^{x,-}_{j,k}=\min\left\{\lambda_1(\bm V^{x,-}_{j,k}),\lambda_1(\bm V^{x,+}_{j,k}),0\right\},&&
a^{x,+}_{j,k}=\max\left\{\lambda_M(\bm V^{x,-}_{j,k}),\lambda_M(\bm V^{x,+}_{j,k}),0\right\},\\
&b^{x,-}_{\jph,\kph}=\min\left\{\mu_1(\bm V^{x,-}_{\jph,\kph}),\mu_1(\bm V^{x,+}_{\jph,\kph}),0\right\},&&
b^{x,+}_{\jph,\kph}=\max\left\{\mu_M(\bm V^{x,-}_{\jph,\kph}),\mu_M(\bm V^{x,+}_{\jph,\kph}),0\right\},
\end{aligned}
\end{equation*}
where $\lambda_1(\bm V)\le\ldots\le\lambda_M(\bm V)$ are the eigenvalues of ${\cal A}(\bm V)$ and
$\mu_1(\bm V)\le\ldots\le\mu_M(\bm V)$ are the eigenvalues of ${\cal B}(\bm V)$. 

The terms on the RHS of \eref{3.2}, related to the update of $\xbar{\bm V}^{\,y}_{j,\kph}$, can be analogously computed through the same
formulae used to evaluate the RHS of \eref{3.1}, but with the first index shifted by $-\hf$, the second index shifted by $+\hf$, and the
superscripts ``$x$'' replaced by ``$y$''.
\begin{remark}
We would like to emphasize that, as in the 1-D case, the proposed 2-D DF-FV method is not tied to the PCCU scheme and the nonconservative
system \eref{1.2} can be numerically solved by an alternative second-order stable numerical method.
\end{remark}

Finally, the post-processing is carried out in a ``dimension-by-dimension'' manner by alternating sweeps in the $x$-direction, in which we
modify the cell averages of $\xbar{\bm U}$ and $\xbar{\bm V}^{\,x}$, and in the $y$-direction, in which we modify the cell averages of
$\xbar{\bm U}$ and $\xbar{\bm V}^{\,y}$. There are two straightforward implementation options: first, to perform the sweep in the
$x$-direction and then the one in the $y$-direction, or the other way around. Unfortunately, both of these options may lead to asymmetries
with respect to the two Cartesian directions. To prevent this, we average the results given by these two alternatives. Namely, using the
same notation as in the 1-D case, we start from $\big\{\,\xbar{\bm U}_{j,k}^{\,*}\big\}$, $\big\{\xbar{\bm V}_{\jph,k}^{\,x,*}\big\}$, and
$\big\{\xbar{\bm V}_{j,\kph}^{\,y,*}\big\}$, denoting the solution values evolved from time $t$ to $t+\dt$ by an ODE solver, and we
independently perform two sub-post-processings:

\smallskip
\noindent
$\bullet$ One with the sweeps in the $x$- and then $y$-direction, resulting in $\big\{\,\xbar{\bm U}_{j,k}^{\,xy}\big\}$,
$\big\{\xbar{\bm V}_{\jph,k}^{\,x,xy}\big\}$, and $\big\{\xbar{\bm V}_{j,\kph}^{\,y,xy}\big\}$;

\smallskip
\noindent
$\bullet$ The other one with the sweeps in the $y$- and then $x$-direction, resulting in $\big\{\,\xbar{\bm U}_{j,k}^{\,yx}\big\}$,
$\big\{\xbar{\bm V}_{\jph,k}^{\,x,yx}\big\}$, and $\big\{\xbar{\bm V}_{j,\kph}^{\,y,yx}\big\}$.

\smallskip
\noindent
Upon the completion of these two sub-post-processing steps, we set
$$
\begin{aligned}
&\xbar{\bm U}_{j,k}(t+\dt)=\hf\Big(\,\xbar{\bm U}_{j,k}^{\,xy}+\xbar{\bm U}_{j,k}^{\,yx}\Big),\\
&\xbar{\bm V}^{\,x}_{\jph,k}(t+\dt)=\hf\Big(\,\xbar{\bm V}_{\jph,k}^{\,x,xy}+\xbar{\bm V}_{\jph,k}^{\,x,yx}\Big),\quad
\xbar{\bm V}^{\,y}_{j,\kph}(t+\dt)=\hf\Big(\,\xbar{\bm V}_{j,\kph}^{\,y,xy}+\xbar{\bm V}_{j,\kph}^{\,y,yx}\Big).
\end{aligned}
$$

The first of the proposed sub-post-processings can be presented algorithmically through the following eight steps, with Steps 1--4
corresponding to the sweep in the $x$-direction and Steps 5--8 corresponding to the sweep in the $y$-direction.

\vskip3pt
\noindent
{\bf Step 1.} Compute the conserved variables at $(x_\jph,y_k)$ using the transformation from the primitive ones:
$$
\bm U_{\jph,k}^*=\bm U\big(\xbar{\bm V}_{\jph,k}^{\,x,*}\big).
$$

\vskip3pt
\noindent
{\bf Step 2.} Perform the piecewise linear reconstruction for $\bm U$ in the $x$-direction using the slopes computed by the minmod limiter
$$
(\bm U_x)_{j,k}^*=2\,{\rm minmod}\left(\frac{\,\xbar{\bm U}_{j,k}^{\,*}-\bm U_{\jmh,k}^*}{\dx},
\frac{\bm U_{\jph,k}^*-\,\xbar{\bm U}_{j,k}^{\,*}}{\dx}\right),
$$
which results in
\begin{equation*}
\bm U_{\jph,k}^{*,-}:=\,\xbar{\bm U}_{j,k}^{\,*}+\frac{\dx}{2}(\bm U_x)_{j,k}^*,\quad
\bm U_{\jph,k}^{*,+}:=\,\xbar{\bm U}_{j+1,k}^{\,*}-\frac{\dx}{2}(\bm U_x)_{j+1,k}^*.
\end{equation*}

\vskip3pt
\noindent
{\bf Step 3.} Set
\begin{equation*}
\bm U_{\jph,k}^{**}:=\hf\Big(\bm U_{\jph,k}^{*,-}+\bm U_{\jph,k}^{*,+}\Big),
\end{equation*}
and recompute the primitive variables at $(x_{\jph},y_k)$ using the transformation from the conservative ones:
$$
\xbar{\bm V}_{\jph,k}^{\,x,xy}=\bm V\big(\bm U_{\jph,k}^{**}\big).
$$

\vskip3pt
\noindent
{\bf Step 4.} Correct the conserved variables by setting
\begin{equation*}
\widetilde{\bm U}_{j,k}:=\hf\Big(\bm U_{\jmh,k}^{**}+\bm U_{\jph,k}^{**}\Big).
\end{equation*}

\vskip3pt
\noindent
{\bf Step 5.} Compute the conserved variables at $(x_j,y_\kph)$ using the transformation from the primitive ones:
$$
\bm U_{j,\kph}^*=\bm U\big(\xbar{\bm V}_{j,\kph}^{\,y,*}\big).
$$

\vskip3pt
\noindent
{\bf Step 6.} Perform the piecewise linear reconstruction for $\bm U$ in the $y$-direction using the slopes computed by the minmod limiter
$$
(\bm U_y)_{j,k}^*=2\,{\rm minmod}\left(\frac{\,\widetilde{\bm U}_{j,k}-\bm U_{j,\kmh}^*}{\dy},
\frac{\bm U_{j,\kph}^*-\,\widetilde{\bm U}_{j,k}}{\dy}\right),
$$
which results in
\begin{equation*}
\bm U_{j,\kph}^{*,-}:=\,\widetilde{\bm U}_{j,k}+\frac{\dy}{2}(\bm U_y)_{j,k}^*,\quad
\bm U_{j,\kmh}^{*,+}:=\,\widetilde{\bm U}_{j,k+1}^{\,*}-\frac{\dy}{2}(\bm U_y)_{j,k+1}^*.
\end{equation*}

\vskip3pt
\noindent
{\bf Step 7.} Set
\begin{equation*}
\bm U_{j,\kph}^{**}:=\hf\Big(\bm U_{j,\kph}^{*,-}+\bm U_{j,\kph}^{*,+}\Big),
\end{equation*}
and recompute the primitive variables at $(x_j,y_\kph)$ using the transformation from the conservative ones:
$$
\xbar{\bm V}^{\,y,xy}_{j,\kph}=\bm V\big(\bm U_{j,\kph}^{**}\big).
$$

\vskip3pt
\noindent
{\bf Step 8.} Correct the conserved variables by setting
\begin{equation*}
\xbar{\bm U}_{j,k}^{\,xy}=\hf\Big(\bm U_{j,\kmh}^{**}+\bm U_{j,\kph}^{**}\Big).
\end{equation*}

\begin{remark}
We would like to stress that many operations within the proposed DF-FV method can be carried out in parallel. The updates of $\bm V^x$ and
$\bm V^y$ can be performed independently, while, the update of $\bm U$ requires $\bm V^x$ and $\bm V^y$, but no reconstructions and simple
numerical flux evaluations. Therefore, with a suitable parallelization of the operations, the computational cost may be significantly
reduced, especially since the post-processing is performed only once per time step.
\end{remark}

\section{Numerical Examples}\label{sec4}
In this section, we present the numerical results obtained for the Euler equations of gas dynamics. In the 1-D case, these equations can be
written in either the conservative form \eref{2.1} with
$$
\bm U=(\rho,\rho u,E)^\top,\quad\bm F(\bm U)=(\rho u,\rho u^2+p,u(E+p))^\top,
$$
and the equation of state $E=\frac{p}{\gamma-1}+\hf\rho u^2$, or the nonconservative form \eref{2.2} with
\begin{equation*}
\bm V=(\rho,u,p)^\top,\quad\widetilde{\bm F}(\bm V)=\Big(\rho u,\frac{u^2}{2},pu\Big)^\top,\quad
B(\bm V)=\begin{pmatrix}0&0&0\\0&0&-\frac{1}{\rho}\\0&-(\gamma-1)p&0\end{pmatrix}.
\end{equation*}

In all of the numerical examples below, the time discretization is performed using the three-stage third-order strong stability-preserving
Runge-Kutta (SSPRK3) method (see, e.g., \cite{GKS,gottlieb2001strong}) with the time-step selected adaptively with the CFL number set to be
0.475. We remark that one can replace the SSPRK3 ODE solver with one's favourite alternative. Our selection is motivated by the good
stability properties of the SSPRK3 method.

The minmod parameter has been set to $\theta=1.3$ in all examples except Example 5, where $\theta=1.1$ was used. In all of the examples, we
set $\gamma=1.4$.

In Examples 2--5, we also compare the performance of the proposed DF-FV method with the second-order central scheme on overlapping cells
(CSOC) from \cite{Liu2005}, implemented on the same mesh as the DF-FV method.

\paragraph{Example 1 (accuracy test for unsteady isentropic vortex).} In the first example taken from
\cite{Lagrange2D,shu1998essentially,micalizzi2023efficient}, we numerically verify the accuracy of the proposed DF-FV method.

We consider a smooth unsteady vortex on the computational domain $[-10,10]\times[-10,10]$, endowed with periodic boundary conditions. The
initial data are
$$
\begin{aligned}
&\rho(x,y,0)=\left(1-\frac{(\gamma-1)\kappa^2}{2\gamma}\right)^{\frac{1}{\gamma-1}},\quad p(x,y,0)=\rho^\gamma(x,y,0),\\
&u(x,y,0)=1-\kappa y,\quad v(x,y,0)=1+\kappa x,\quad\kappa=\frac{5}{2\pi}e^{\frac{1-x^2-y^2}{2}}.
\end{aligned}
$$
The exact solution is given by $\bm U(x,y,t)=\bm U(x-t,y-t,0)$, modulo the periodic boundary conditions.

We compute the solution until the final time $t=0.1$ on a sequence of uniform $N\times N$ meshes with $N=100$, $200$, $400$, $800$, and
$1600$. The obtained results are reported in Table \ref{tab41}, in which we demonstrate that the expected order of convergence is achieved
in the $\bm U$-solution (we show the $\rho$-, $\rho u$-, and $E$-components), $\bm V^x$-solution (we show the $v$-component), and
$\bm V^y$-solution (we show the $p$-component). The same order is observed in all other components, not shown here for the sake of brevity.
\begin{table}[ht!]
\centering
\begin{tabular}{|c|cc|cc|cc|cc|cc|}
\hline 
$N$ & $\rho$-error & rate & $\rho u$-error & rate & $E$-error & rate & $v$-error & rate & $p$-error & rate \\
\hline
100 &1.08e-2&--  &2.59e-2&--  &6.36e-2&--  &6.15e-2&--  &3.33e-2&--  \\
200 &3.04e-3&1.82&6.44e-3&2.01&1.66e-2&1.94&1.58e-2&1.96&7.91e-3&2.07\\
400 &9.08e-4&1.74&1.68e-3&1.94&4.38e-3&1.93&4.12e-3&1.94&1.90e-3&2.06\\
800 &2.44e-4&1.90&4.25e-4&1.98&1.08e-3&2.01&1.04e-3&1.99&4.53e-4&2.07\\
1600&5.86e-5&2.06&1.01e-4&2.07&2.44e-4&2.15&2.56e-4&2.02&1.06e-4&2.10\\
\hline
\end{tabular}
\caption{\sf Example 1: $L^1$-errors and corresponding convergence rates for the $\rho$-, $\rho u$-, and $E$-components of the
$\bm U$-solution, $v$-component of the $\bm V^x$-solution, and $p$-component of the $\bm V^y$-solution.\label{tab41}}
\end{table}

It should be observed that in this example, the solution is smooth and thus it can, in principle, be computed without post-processing.
We have verified that, in this case, the second-order convergence is also achieved. These results are, however, omitted since the presented
DF-FV method without the post-processing is impractical.

\paragraph{Example 2 (Sod shock-tube problem).} In the second example, we consider the Sod shock-tube problem from \cite{sod1978survey}. The
Riemann initial data,
$$
\bm V(x,0)=\left\{\begin{aligned}&(1,0,1)^\top,&&x<0.5,\\&(0.125,0,0.1)^\top,&&\mbox{otherwise},\end{aligned}\right.
$$
are prescribed in the computational domain $[0,1]$ with the free boundary conditions.

We compute the solution until the final time $t=0.2$ on a uniform mesh with $N=200$ and plot it in Figure \ref{fig41} together with the 
exact solution, obtained using the library NUMERICA \cite{toro1999numerica}, and the solution computed by the DF-FV method but without
post-processing. As one can see, the DF-FV solution is oscillation-free and its rather sharply resolved discontinuities are located at the
correct locations thanks to the conservative post-processing. On the contrary, the DF-FV method without the post-processing produces very
large spurious spikes in the $\bm U$-solution (note that these spikes are located exactly where the initial condition was discontinuous) and
a non-oscillatory $\bm V$-solution, which apparently captures a wrong weak solution as expected (see
\cite{abgrall2010comment,hou1994nonconservative}). Therefore, using such $\bm V$-values to update the $\bm U$-variables prevents convergence
to the correct weak solution, despite the conservative nature of the update. In fact, refining the mesh does not lead to convergence of the
$\bm U$ solution, and the spurious spikes persist without decay.
\begin{figure}[ht!]
\centerline{\includegraphics[width=0.32\textwidth]{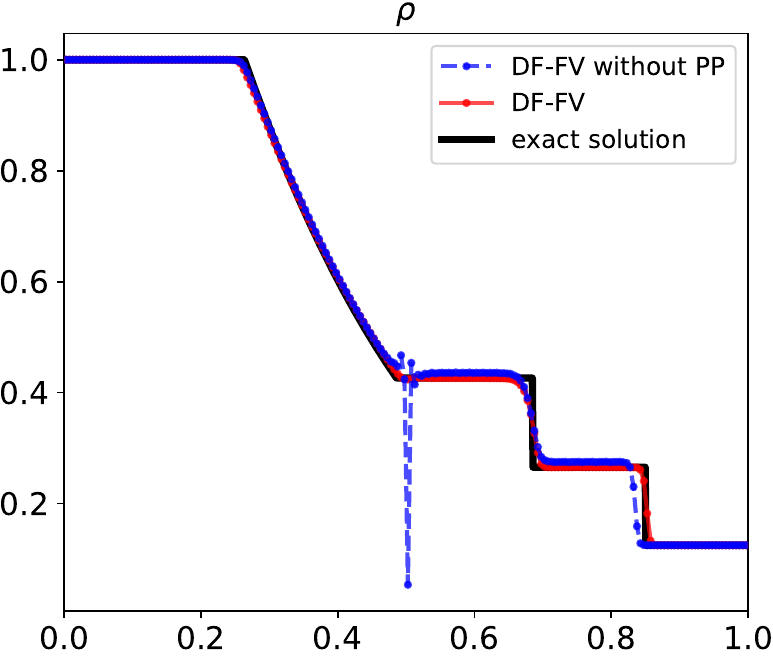}\hspace*{0.2cm}
	    \includegraphics[width=0.32\textwidth]{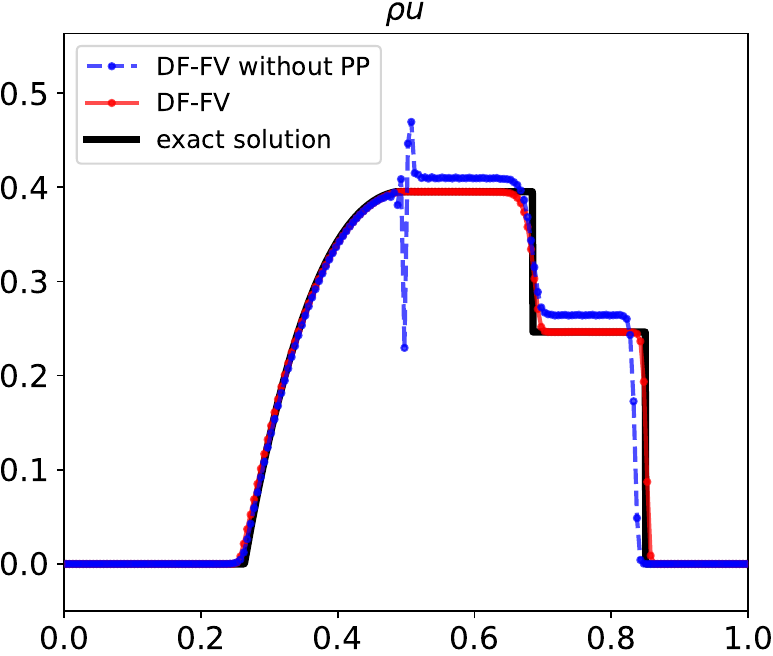}\hspace*{0.2cm}
	    \includegraphics[width=0.32\textwidth]{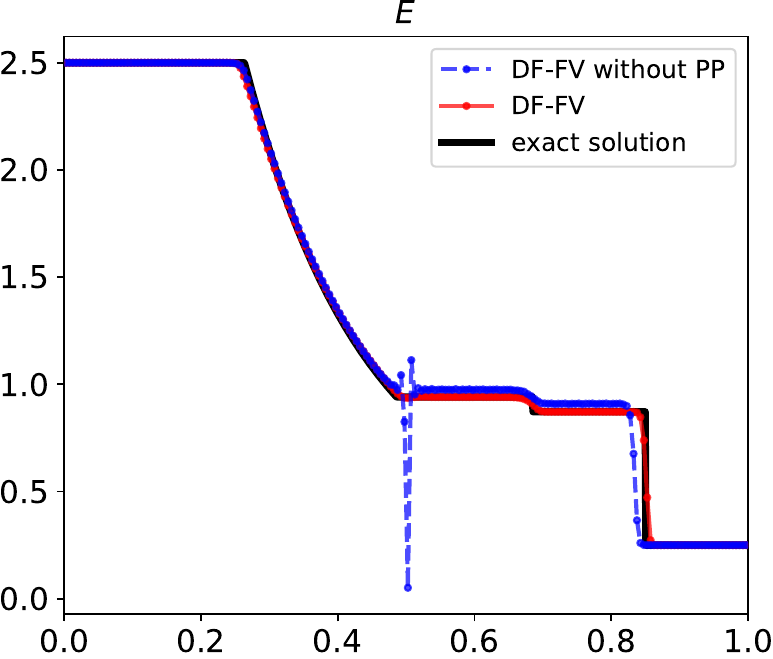}}
\vskip5pt
\centerline{\includegraphics[width=0.32\textwidth]{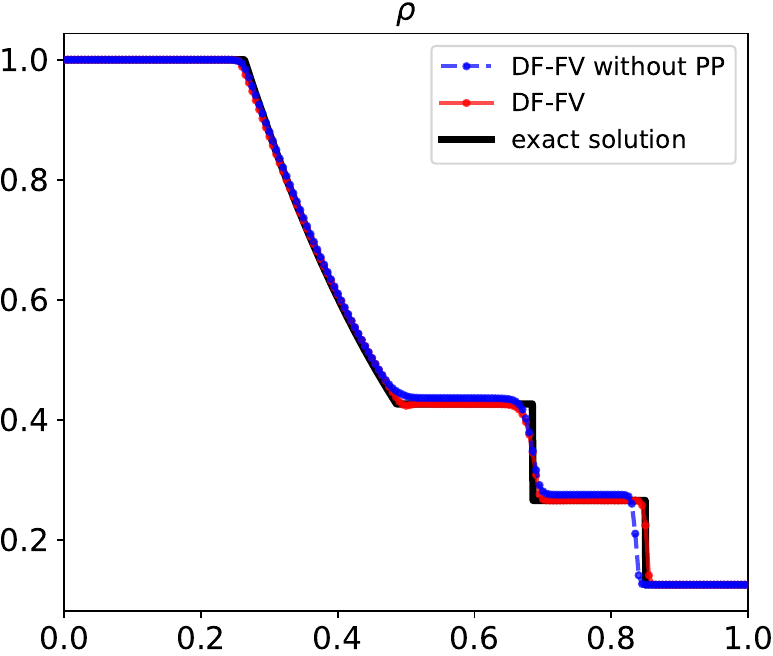}\hspace*{0.2cm}
	    \includegraphics[width=0.32\textwidth]{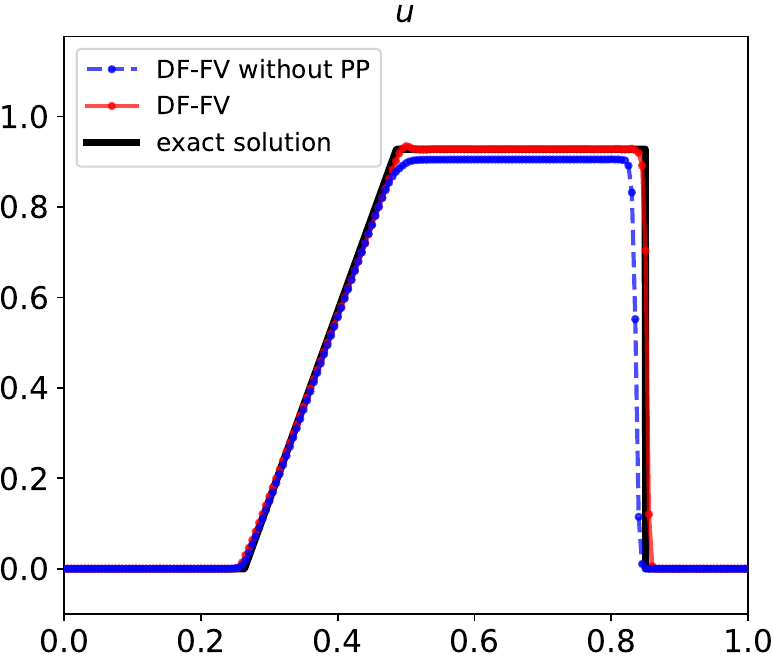}\hspace*{0.2cm}
            \includegraphics[width=0.32\textwidth]{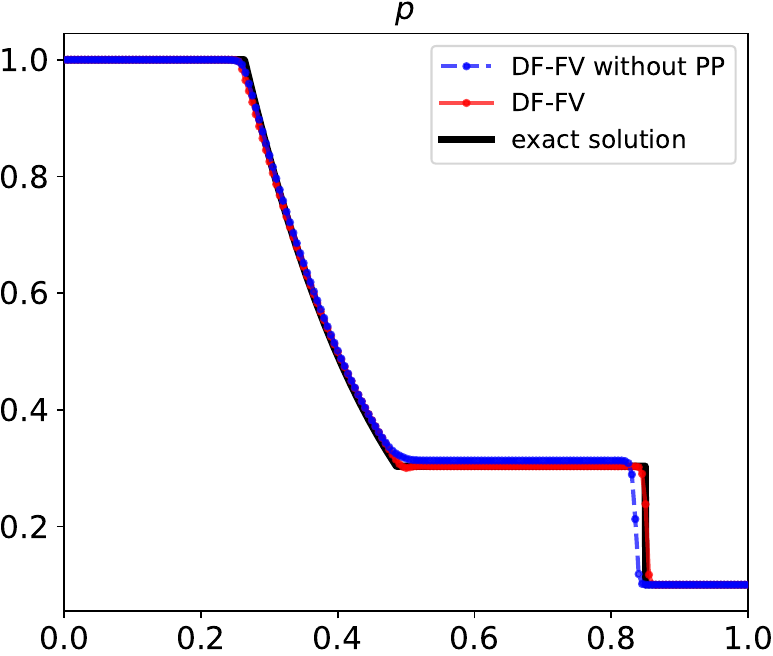}}
\caption{\sf Example 2: $\bm U$-solutions (upper row) and $\bm V$-solutions (lower row) computed by the DF-FV method and the DF-FV method
without the post-processing.\label{fig41}}
\end{figure}

In Figure \ref{fig41f}, we report the comparison between the DF-FV method and CSOC. While the results are comparable, one can see that the
DF-FV method slightly outperforms its counterpart even on this very basic numerical example.
\begin{figure}[ht!]
\centerline{\includegraphics[width=0.32\textwidth]{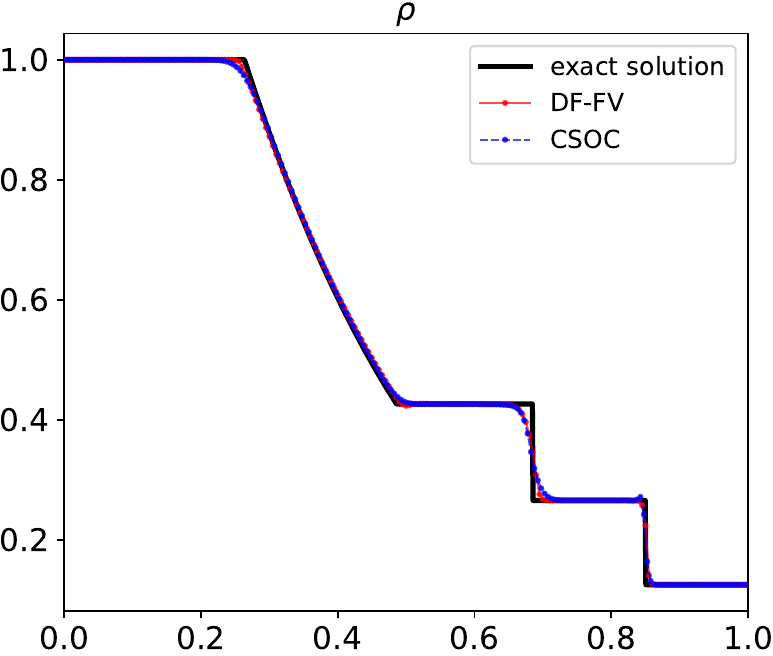}\hspace*{0.2cm}
            \includegraphics[width=0.32\textwidth]{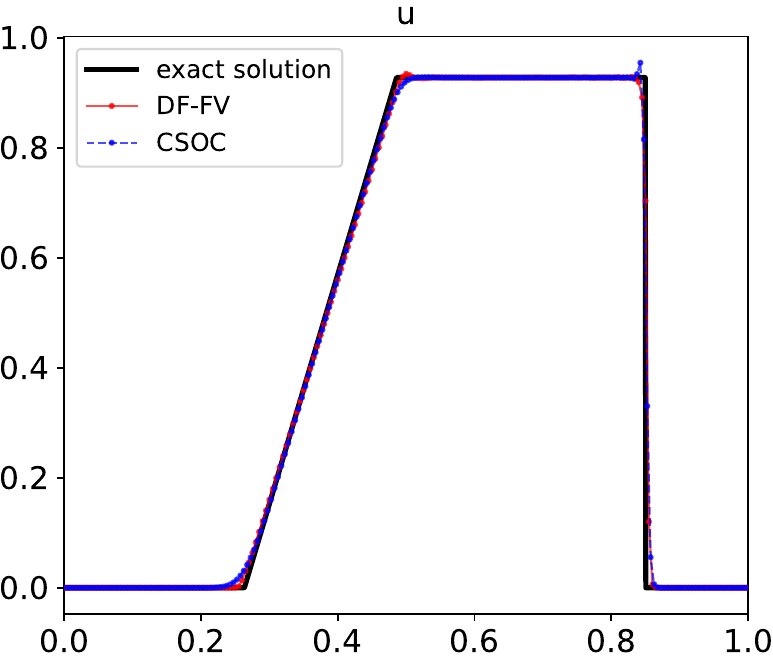}\hspace*{0.2cm}
            \includegraphics[width=0.32\textwidth]{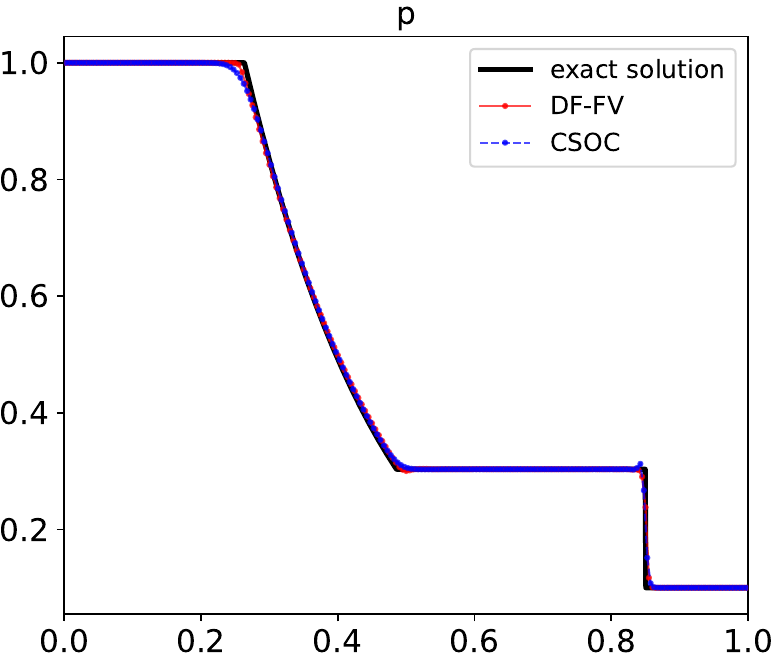}}
\caption{\sf Example 2: Comparison between the DF-FV $\bm V$-solution and CSOC solution.\label{fig41f}}
\end{figure}

\paragraph{Example 3 (double rarefaction problem).} In this problem taken from \cite{ToroBook}, we consider a Riemann problem, whose
solution contains two rarefaction waves, which expand and form a near-vacuum area in the middle of the computational domain $[0,1]$.
The initial conditions,
$$
\bm V(x,0)=\left\{\begin{aligned}&(1,-2,0.4)^\top,&&x<0.5,\\&(1,2,0.4)^\top,&&\mbox{otherwise},\end{aligned}\right.
$$
are supplemented with the free boundary conditions. 

We compute the solution by the DF-FV method and CSOC until the final time $t=0.15$ on a uniform mesh with $N=200$ and plot the obtained
results together with the exact solution, once again generated using the library NUMERICA \cite{toro1999numerica} in Figure \ref{fig42}. As
one can see, both schemes preserve the positivity of the density and pressure and the obtained solutions are oscillation-free. At the same
time, the proposed DF-FV method outperforms the CSOC in the resolution of low-density parts of the computed solutions and near the
rarefaction corners even though the enlarged numerical diffusion attributed to the post-processing oversmears the velocity profile captured
by the DF-FV method.
\begin{figure}[ht!]
\centerline{\includegraphics[width=0.31\textwidth]{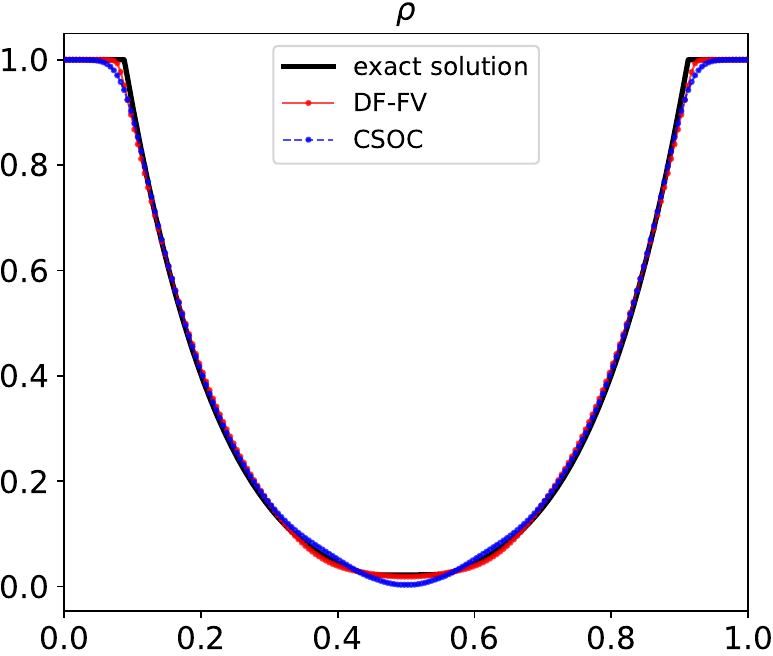}\hspace*{0.2cm}
            \includegraphics[width=0.32\textwidth]{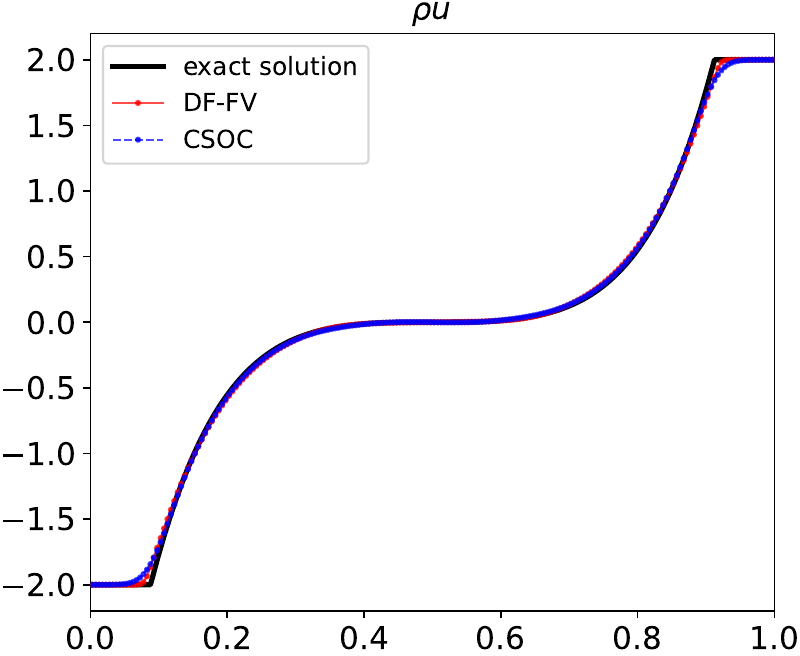}\hspace*{0.2cm}
	    \includegraphics[width=0.31\textwidth]{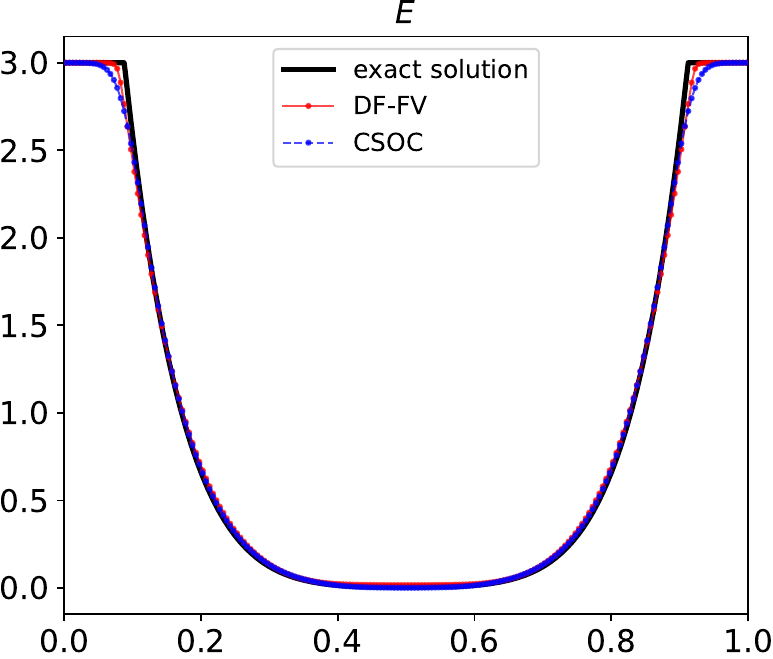}}
\vskip5pt
\centerline{\includegraphics[width=0.31\textwidth]{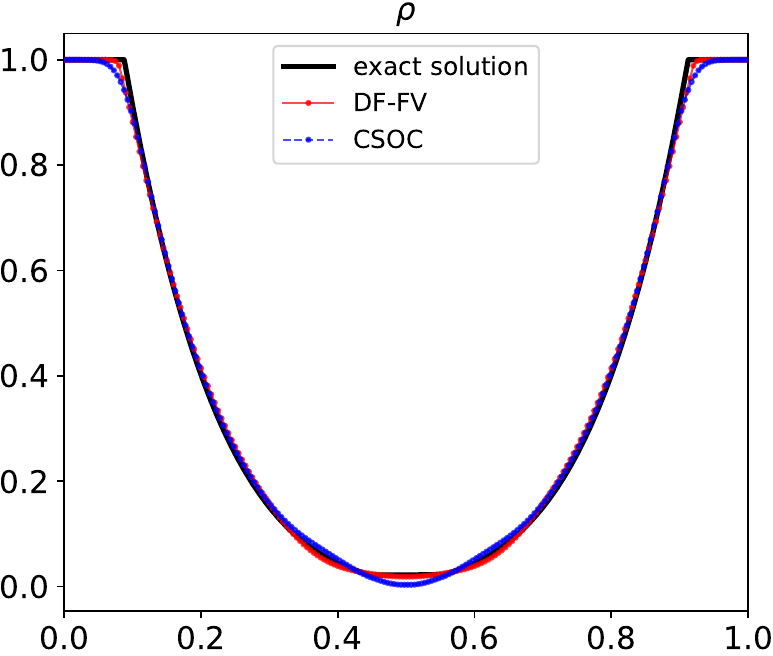}\hspace*{0.2cm}
            \includegraphics[width=0.32\textwidth]{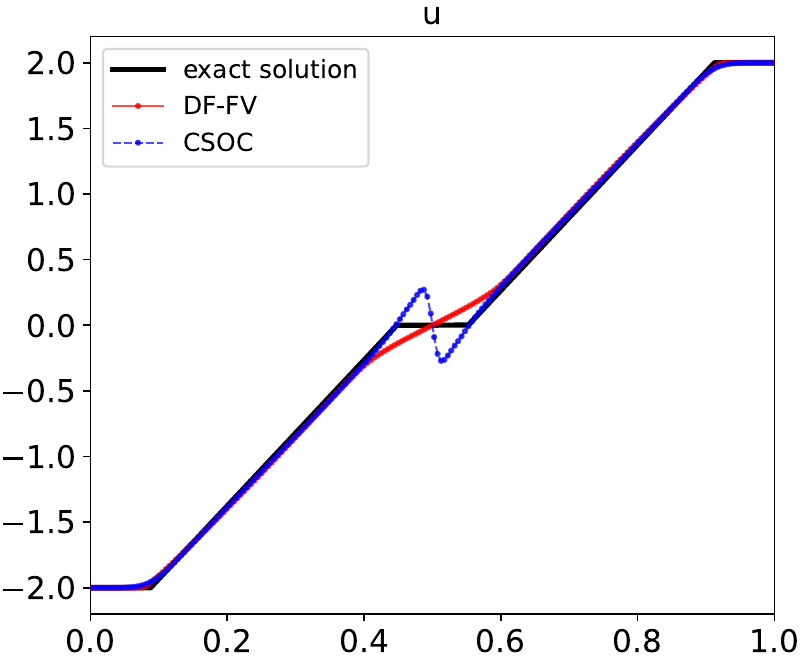}\hspace*{0.2cm}
            \includegraphics[width=0.32\textwidth]{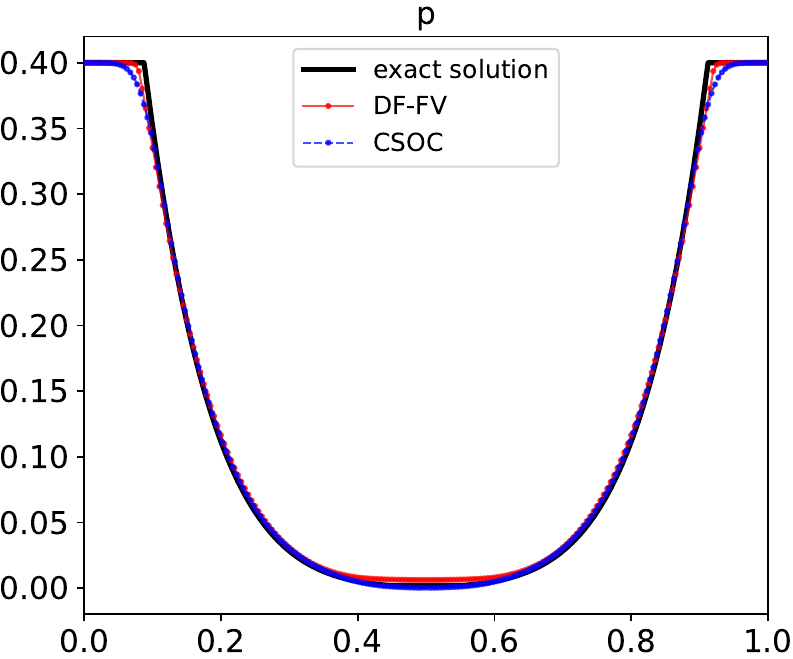}}
\caption{\sf Example 3: $\bm U$-solutions (upper row) and $\bm V$-solutions (lower row) plotted along with the CSOC solution.\label{fig42}}
\end{figure}

\paragraph{Example 4 (shock-turbulence interaction problem).} In this example taken from \cite{shu1989efficient}, a shock interacts with a
turbulent flow characterized by high-frequency oscillations.

The initial data,
$$
\bm V(x,0)=
\left\{\begin{aligned}
&(3.857143,2.629369,10.333333)^\top,&&x<-4,\\
&(1+0.2\sin(5x),0,1)^\top,&&\mbox{otherwise},
\end{aligned}\right.
$$
are prescribed in the computational domain $[-5,5]$ with the inflow boundary conditions at the left boundary and free boundary conditions at
the right one.

We compute the solution by the DF-FV method and CSOC until the final time $t=1.8$ on a uniform mesh with $N=600$ and report the obtained
results in Figure \ref{fig43}. Since in this example no exact solution is available, we compute the reference solution by a second-order
semi-discrete CU scheme from \cite{KLin} on a much finer uniform mesh consisting of $200000$ cells. As one can see, the solution structures
are correctly resolved by both of the studied schemes, but the resolution of the smooth, oscillating parts of the solution achieved by the
DF-FV method is substantially higher, as can be further seen in Figure \ref{fig45f}, where a zoom at the spatial interval $[0,2.5]$ is
shown.
\begin{figure}[ht!]
\centerline{\includegraphics[width=0.32\textwidth]{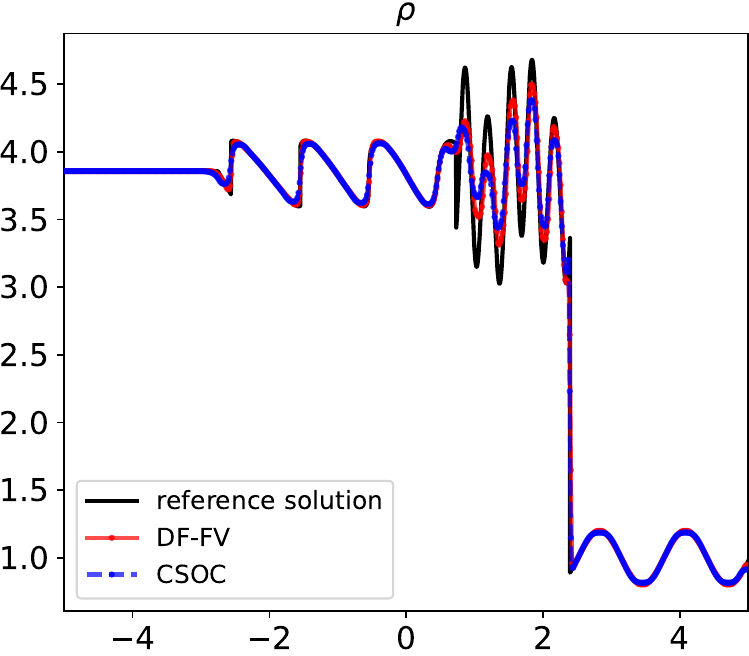}\hspace*{0.2cm}
            \includegraphics[width=0.32\textwidth]{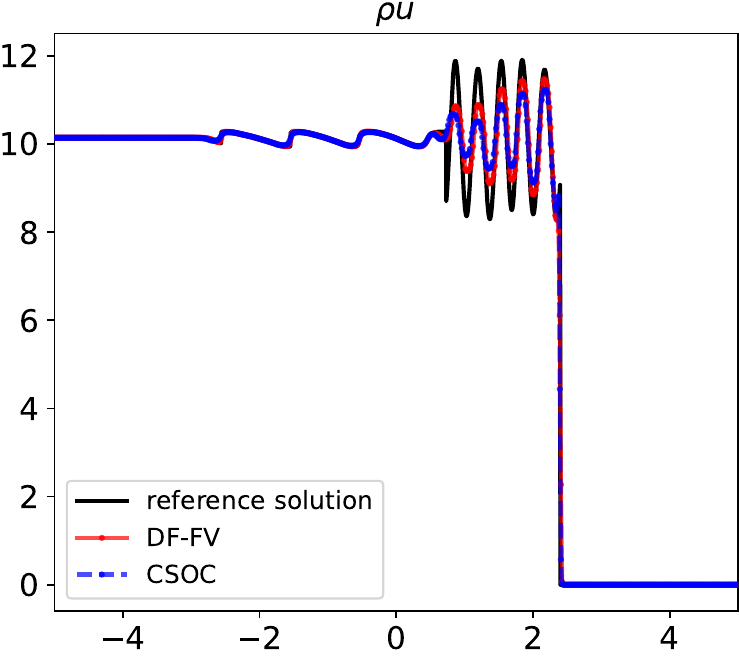}\hspace*{0.2cm}
	    \includegraphics[width=0.32\textwidth]{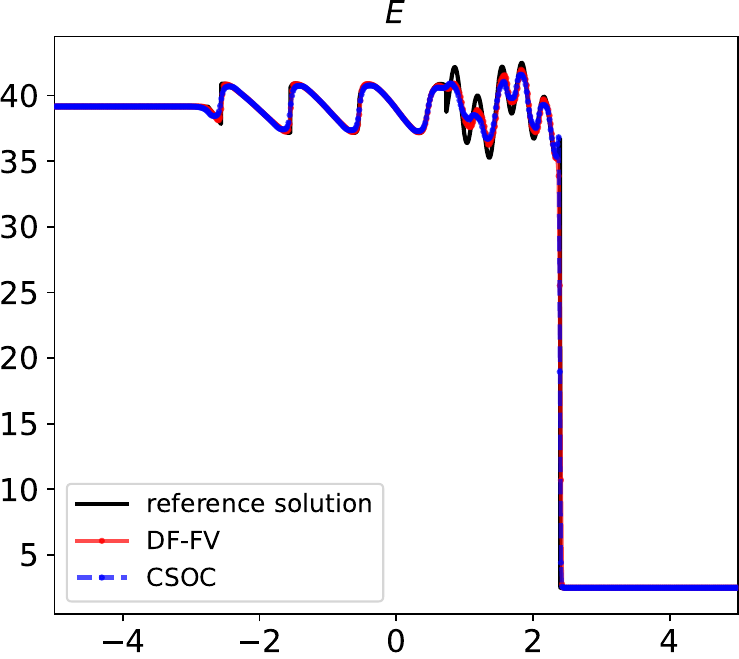}}
\vskip5pt
\centerline{\includegraphics[width=0.32\textwidth]{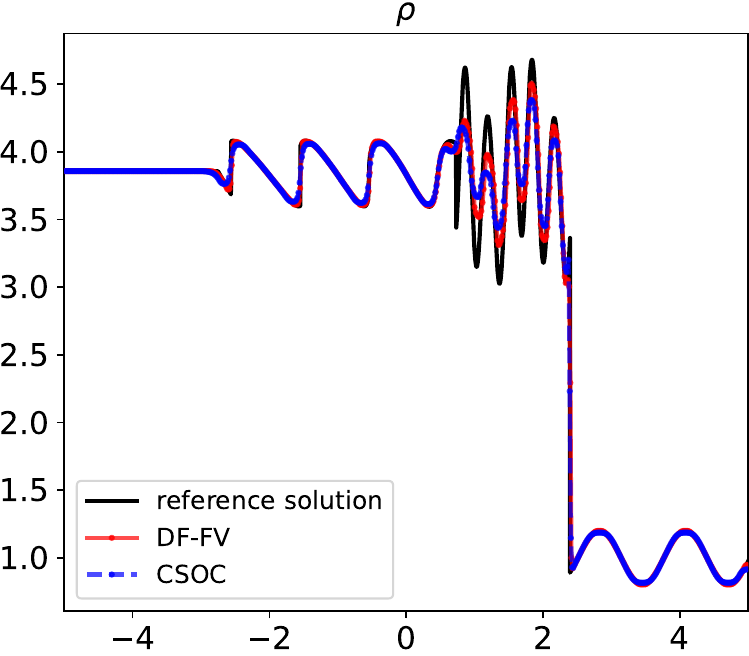}\hspace*{0.2cm}
            \includegraphics[width=0.32\textwidth]{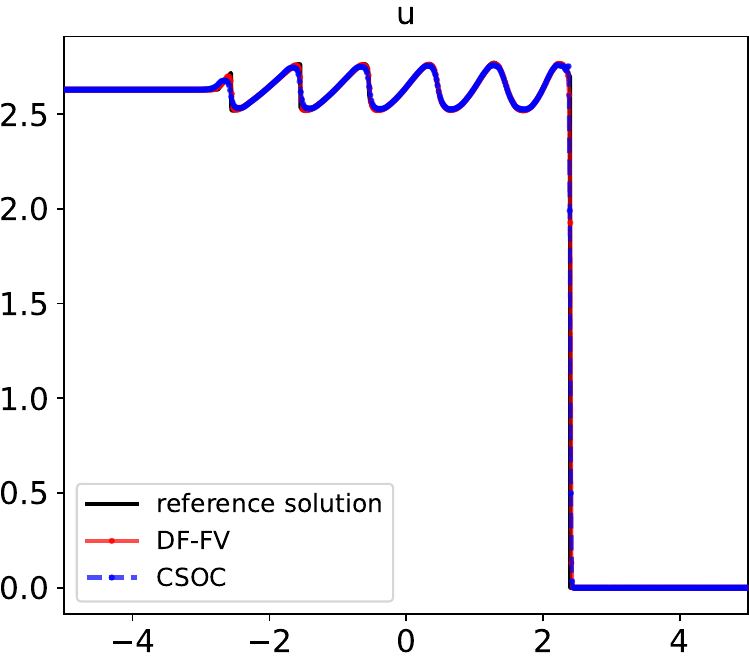}\hspace*{0.2cm}
            \includegraphics[width=0.32\textwidth]{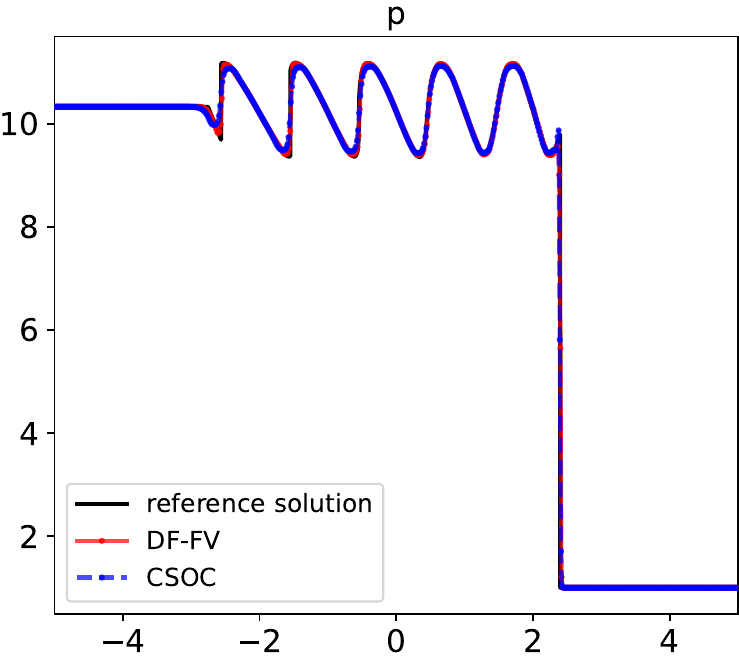}}
\caption{\sf Example 4: $\bm U$-solutions (upper row) and $\bm V$-solutions (lower row) plotted along with the CSOC solution.\label{fig43}}
\end{figure}
\begin{figure}[ht!]
\centerline{\includegraphics[trim=0.3cm 0.3cm 0.2cm 0.3cm, clip, width=0.43\textwidth]{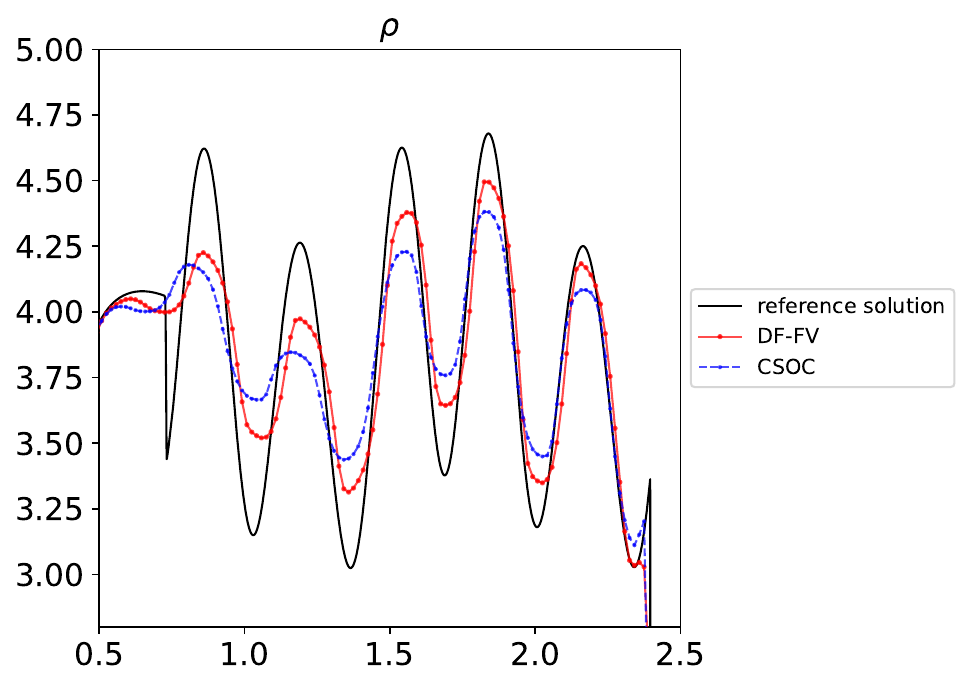}}
\caption{\sf The same as in the upper left panel in Figure \ref{fig43}, but zoomed at $x\in[0,2.5]$.\label{fig45f}}
\end{figure}

\paragraph{Example 5 (Woodward-Colella problem).} In this problem, which was introduced in \cite{woodward1984numerical}, the initial 
conditions,
$$
\bm V(x,0)=
\left\{\begin{aligned}
&(1,0,10^3)^\top,&&x<0.1,\\
&(1,0,10^2)^\top,&&x>0.9,\\
&(1,0,10^{-2})^\top,&&\mbox{otherwise},
\end{aligned}\right.
$$
are prescribed in the computational domain $[0,1]$ with solid wall boundary conditions.

In Figure \ref{fig45}, we plot the solutions computed by the DF-FV method and CSOC at the final time $t=0.038$ on a uniform mesh with
$N=400$ along with the reference solution generated by a second-order semi-discrete CU scheme from \cite{KLin} with reconstruction of
characteristic variables and the same time discretization on a much finer uniform mesh consisting of $200000$ cells. The obtained results
demonstrate the ability of the proposed DF-FV method to capture strong discontinuities. It also clearly shows the superiority of our method
compared to the CSOC.
\begin{figure}[ht!]
\centerline{\includegraphics[width=0.30\textwidth]{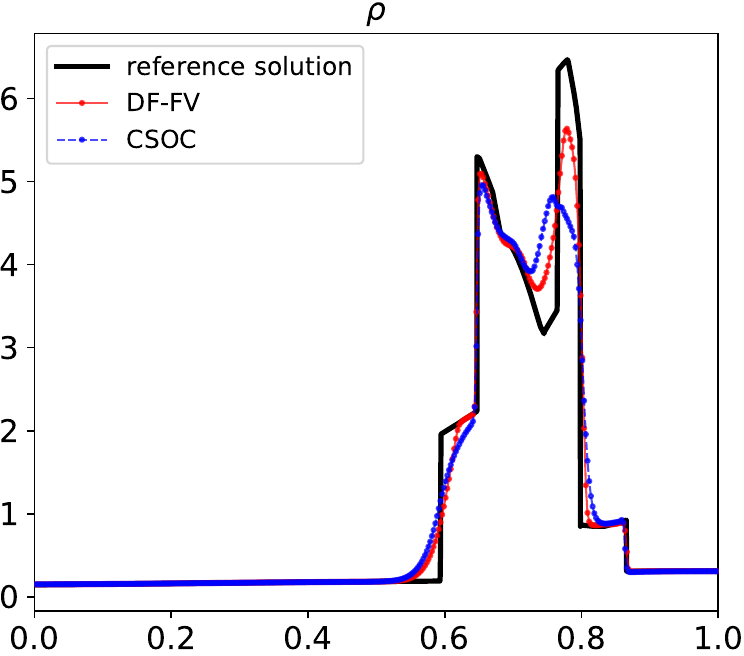}\hspace*{0.2cm}
            \includegraphics[width=0.31\textwidth]{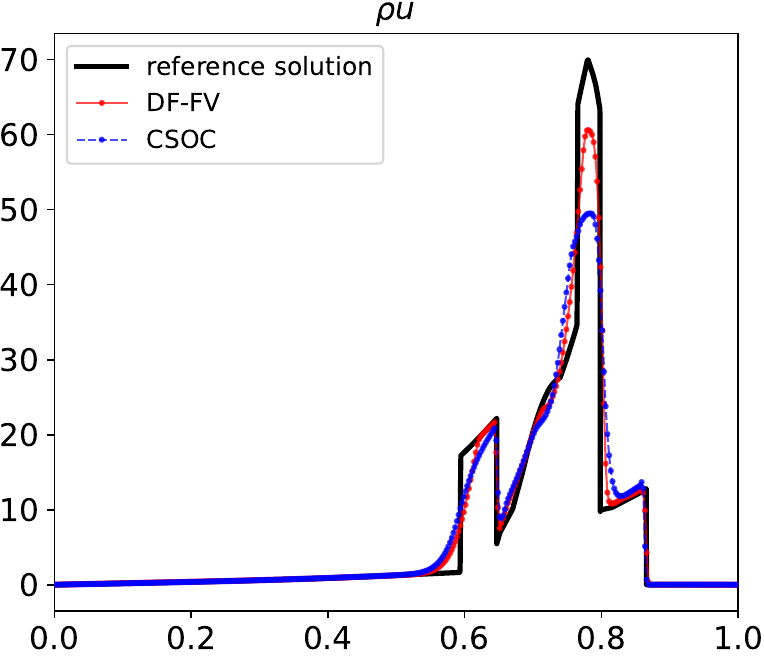}\hspace*{0.2cm}
            \includegraphics[width=0.33\textwidth]{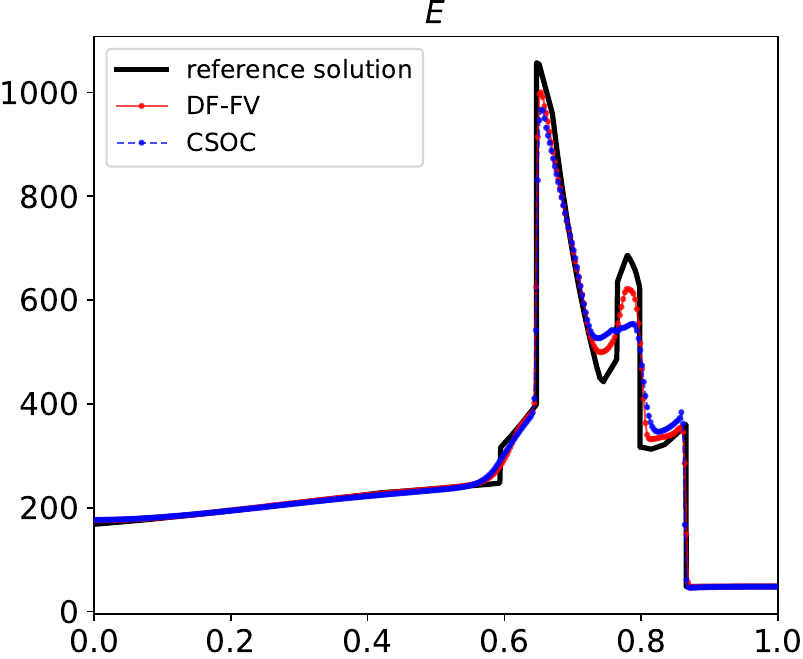}}
\vskip5pt
\centerline{\includegraphics[width=0.30\textwidth]{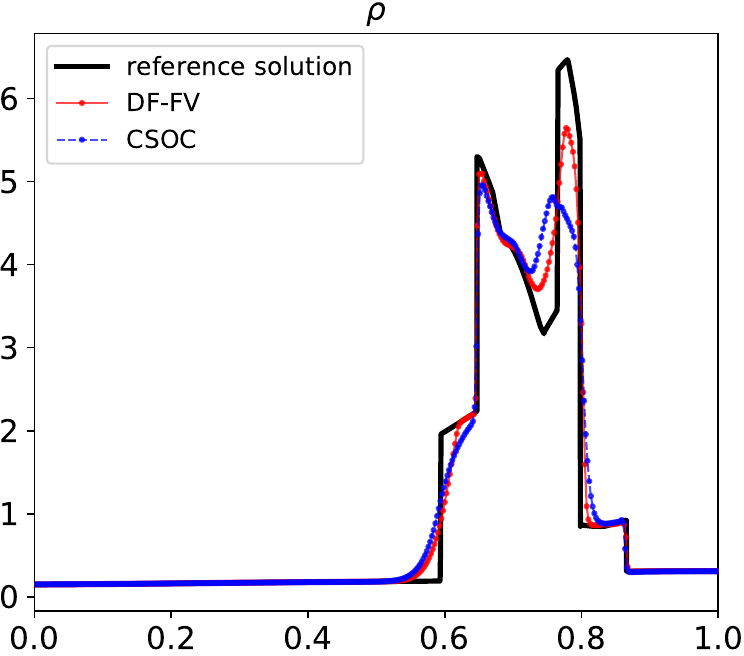}\hspace*{0.25cm}
            \includegraphics[width=0.31\textwidth]{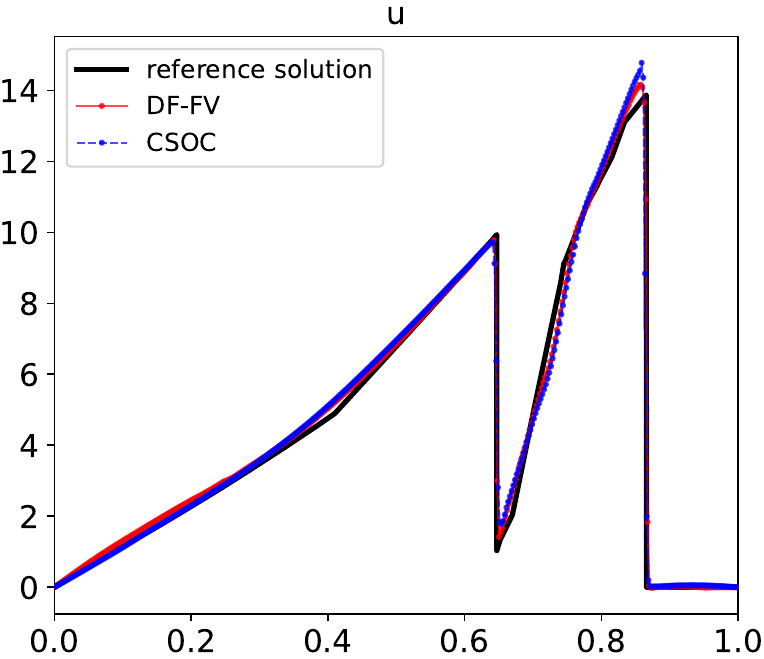}\hspace*{0.25cm}
            \includegraphics[width=0.32\textwidth]{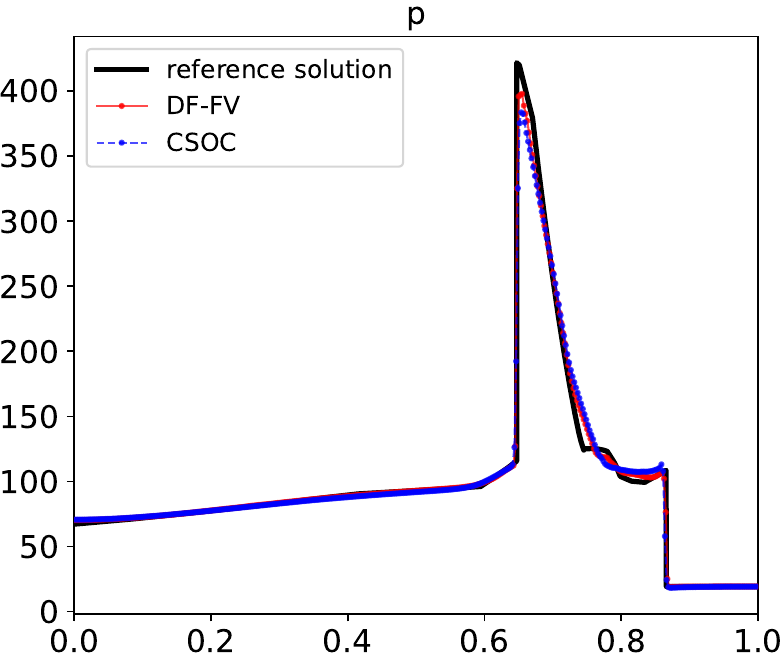}}
\caption{\sf Example 5: $\bm U$-solutions (upper row) and $\bm V$-solutions (lower row) plotted along with the CSOC solution.\label{fig45}}
\end{figure}

\paragraph{Example 6 (explosion problem).} This problem, taken from \cite{ToroBook}, is a multidimensional extension of the Sod shock-tube problem considered in Example 2.

We take the following initial conditions:
$$
\bm V(x,y,0)=\left\{\begin{aligned}
&(1,0,0,1)^\top,&&\sqrt{x^2+y^2}<0.4,\\
&(0.125,0,0,0.1)^\top,&&\mbox{otherwise},
\end{aligned}\right.
$$
which are prescribed in the computational domain $[-1,1]\times[-1,1]$ with the free boundary conditions.

We compute the solution until the final time $t=0.25$ on a uniform mesh with $N_x=N_y=400$. In Figure \ref{fig46}, we report
three-dimensional plots of the density- and energy-components of the $\bm U$-solution and the density- and pressure-components of the
$\bm V^x$-solution (we do not show the $\bm V^y$-solution as it is practically the same as the $\bm V^x$-one). The 1-D slices of $\rho$,
$\sqrt{(\rho u)^2+(\rho v)^2}$, and $E$ from the $\bm U$-solution and of $\rho$, $\sqrt{u^2+v^2}$, and $p$ from the $\bm V^x$-solution 
along the line $y=x$ are shown in Figure \ref{fig47} (the reference solution plotted there has been obtained by a second-order semi-discrete
FV scheme with reconstruction of characteristic variables and exact Riemann solver numerical flux on a much finer uniform mesh consisting of
$3000\times3000$ cells). As one can see, the numerical solution does not exhibit significant spurious oscillations or anomalous features,
and the solution is sharply captured. One can also observe that the radial symmetry is preserved.
\newcommand{\trimfig}[1]{\includegraphics[width=0.35\textwidth, trim=140 260 190 250, clip]{#1}}
\begin{figure}[ht!]
\centerline{\trimfig{fig47a}\qquad\trimfig{fig47b}}
\vskip5pt
\centerline{\trimfig{fig47c}\qquad\trimfig{fig47d}}
\caption{\sf Example 6: Density- and energy-components of the $\bm U$-solution (upper row) and density- and pressure-components of the
$\bm V^x$-solution (lower row).\label{fig46}}
\end{figure}
\begin{figure}[ht!]
\centerline{\includegraphics[width=0.31\textwidth]{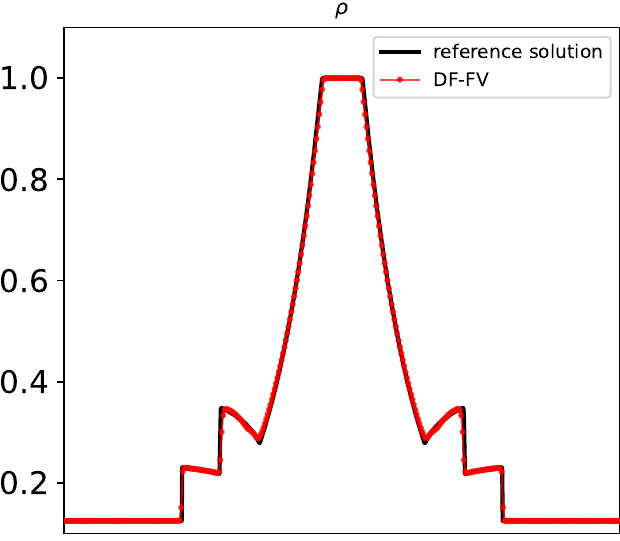}\hspace*{0.3cm}
            \includegraphics[width=0.31\textwidth]{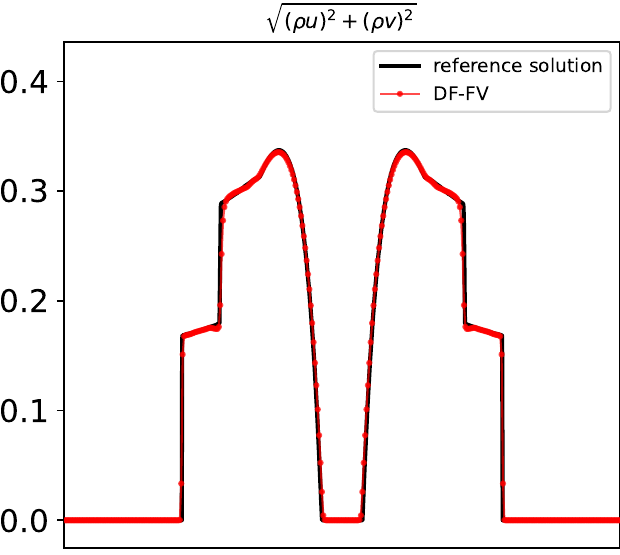}\hspace*{0.3cm}
            \includegraphics[width=0.31\textwidth]{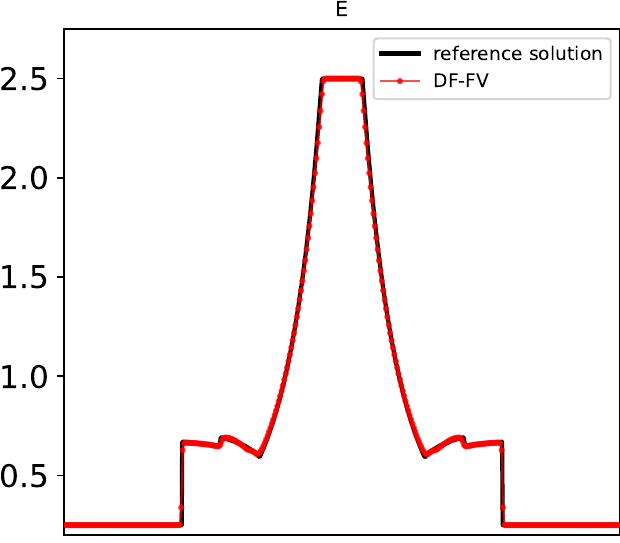}}
\vskip5pt
\centerline{\includegraphics[width=0.31\textwidth]{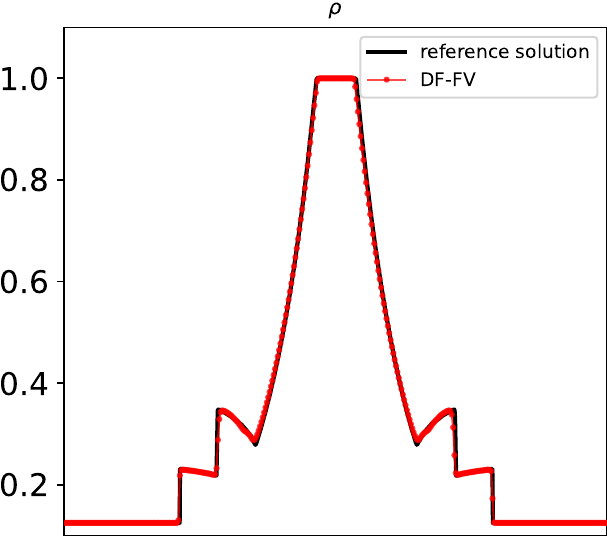}\hspace*{0.3cm}
            \includegraphics[width=0.31\textwidth]{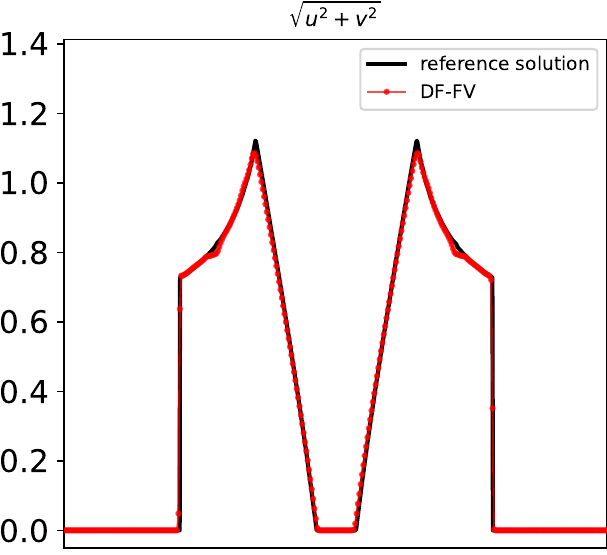}\hspace*{0.3cm}
	    \includegraphics[width=0.31\textwidth]{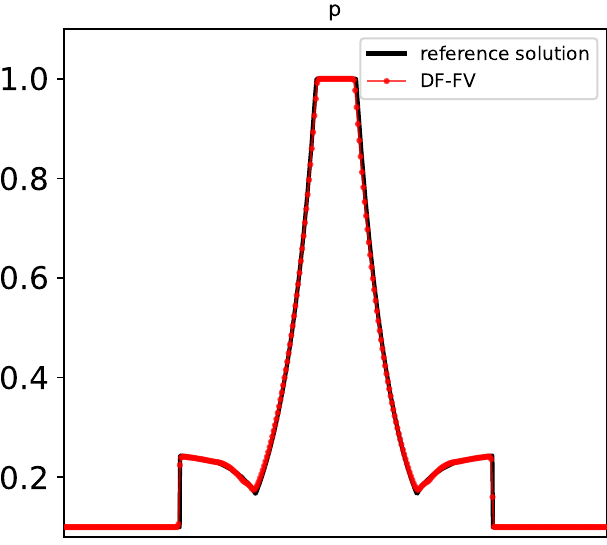}}
\caption{\sf Example 6: 1-D slices along $y=x$ for the $\bm U$-solution (upper row) and $\bm V^x$-solution (lower row).\label{fig47}}
\end{figure}

\paragraph{Example 7 (shock-vortex interaction).} 
In this test, which was studied in, e.g.,\cite{DZLD,geisenhofer2019discontinuous,RCD}, we consider a moving vortex with Mach number
$M_v:=0.9$ interacting with a stationary shock with Mach number $M_s:=1.5$. Such an interaction gives rise to complex flow structures,
making this numerical test challenging for higher-order numerical schemes. 

A schematic of the initial setup is shown in Figure \ref{fig48f}. Specifically, the computational domain $[0,2]\times[0,1]$ is divided 
into two main subdomains by the vertical line $x=0.5$. The vortex is initially located within a circular area centered on 
$(x_c,y_c):=(0.25,0.5)$ and occupies regions I and II: the first one is a circle of radius $a=0.075$ and the latter one is a concentrical
annulus with inner radius $a$ and outer radius $b=0.175$. The states in regions III and IV correspond to a stationary shock. In particular, 
given the left constant state from region III,
$$
\rho_{\rm III}=1,\quad u_{\rm III}=\sqrt{\gamma}M_s,\quad v_{\rm III}=0,\quad p_{\rm III}=1,
$$
the right state, $(\rho_{\rm IV},u_{\rm IV},v_{\rm IV},p_{\rm IV})$, can be easily computed through the Rankine-Hugoniot conditions (see
\cite[Section 3.1.3]{ToroBook}), leading to
$$
\rho_{\rm IV}=\frac{(\gamma+1)M_s^2}{(\gamma-1)M_s^2+2}\rho_{\rm III},\quad u_{\rm IV}=\frac{(\gamma-1)M_s^2+2}{(\gamma+1)M_s^2}u_{\rm III}
\quad v_{\rm IV}=0,\quad p_{\rm IV}=\frac{2\gamma M_s^2-(\gamma-1)}{\gamma+1}p_{\rm III}.
$$
\begin{figure}[ht!]
\centering
\begin{tikzpicture}[scale=7]
\def\angleA{-60}   
\def\angleB{145}    
\def\angleR{45}   
\def\rA{0.09}
\def\rB{0.18}
\def\rR{0.16}
\def\bottom{0.2}
\def\top{0.8}
\coordinate (C) at (0.25,0.5);
\draw[thick] (0,\bottom) rectangle (1.35,\top);
\draw[very thick] (0.5,\bottom) -- (0.5,\top);
\draw[line width=0.5mm] (C) circle (\rA); 
\draw[line width=0.5mm] (C) circle (\rB); 
\draw[dashed] (C) circle (\rR);  
\draw[->] (C) -- ++({\angleA}:\rA)
node[pos=0.7, anchor=south] {\(a\)};
\draw[->] (C) -- ++({\angleB}:\rB)
node[pos=0.6, anchor=south] {\(b\)};
\draw[->] (C) -- ++({\angleR}:\rR)
node[pos=0.6, anchor=south] {\(r\)};
\draw[dashed] (C) -- (0.25, \bottom); 
\draw[dashed] (C) -- (0, 0.5);  
\node[anchor=east] at (0,\top) {$y=1$};
\node[anchor=east] at (0,\bottom) {$y=0$};
\node[anchor=north] at (0,\bottom) {$x=0$};
\node[anchor=north] at (0.5,\bottom) {$x=0.5$};
\node[anchor=north] at (1.35,\bottom) {$x=2$};
\node[anchor=south] at (0.5,\top+0.03) {stationary};
\node[anchor=south] at (0.5,\top) {shock};
\node[anchor=north] at (0.25,\bottom) {$x=0.25$};
\node[anchor=east] at (0,0.5) {$y=0.5$};
\node[scale=0.8] at (0.22, 0.46) {\textbf{I}};
\node[scale=0.8] at (0.18, 0.4) {\textbf{II}};
\node[scale=0.8] at (0.13, 0.31) {\textbf{III}};
\node[scale=0.8] at (1.0, 0.5) {\textbf{IV}};
\draw[dashed] (0.25,0.5) -- (0.25+\rB,0.5); 
\def\rTheta{0.11} 
\draw[->] ($(C)+(\rTheta,0)$) arc[start angle=0, end angle=\angleR, radius=\rTheta];
\node at ($(C)+({\angleR/2}:1.2*\rTheta)$) {\(\vartheta\)};		
\end{tikzpicture}
\caption{\sf Example 7: Sketch of the simulation set-up.}
\label{fig48f}
\end{figure}

The velocity profile in vortex regions I and II is given, in terms of radial coordinates $(r,\vartheta)$ with respect to the center 
$(x_c,y_c)$:
$$
u(r,\vartheta)=u_{\rm III}-v_\vartheta\sin\vartheta,\quad v(r,\vartheta)=v_{\rm III}+v_\vartheta\cos{\vartheta},
$$
where 
$$
v_\vartheta:=\left\{\begin{aligned}
&v_m\,\frac{r}{a},&&r\le a,\\
&v_m\,\frac{a}{a^2-b^2}\left(r-\frac{b^2}{r}\right),&&a<r<b,\\
&0,&&r\ge b,\\
\end{aligned}\right.
$$
with $v_m:=M_v\sqrt{\gamma}$ being the the maximal angular velocity.

Density and pressure profiles inside the vortex are obtained by imposing a balance between centripetal force and pressure gradients 
\cite{RCD}, resulting in
$$
p=p_{\rm III}\left(\frac{T}{T_{\rm III}}\right)^{\frac{\gamma}{\gamma-1}},\quad
\rho=\rho_{\rm III}\left(\frac{T}{T_{\rm III}}\right)^{\frac{1}{\gamma-1}},
$$
where $T_{\rm III}=\frac{p_{\rm III}}{\rho_{\rm III}R}$ is the constant temperature associated with the state of region III, with
$R=287$ J/kg-K being the specific gas constant of the fluid, and $T(r)$ is the temperature within the vortex:
$$
T(r)=\left\{\begin{aligned}
&A+\frac{\gamma-1}{R\gamma}\frac{v_m^2}{a^2}\frac{r^2}{2},&&r\le a,\\
&B+\frac{\gamma-1}{R\gamma}v_m^2\frac{a^2}{(a^2-b^2)^2}\left(\frac{r^2}{2}-2b^2\ln r-b^4\frac{r^{-2}}{2}\right),&&a<r<b,\\
&T_{\rm III},&&r\ge b,
\end{aligned}\right.
$$
where
$$
\begin{aligned}
B&=T_{\rm III}-\frac{\gamma-1}{R\gamma}v_m^2\frac{a^2}{(a^2-b^2)^2}\left(\frac{b^2}{2}-2b^2\ln b-b^4\frac{b^{-2}}{2}\right),\\[0.3ex]
A&=B+\frac{\gamma-1}{R\gamma}v_m^2\frac{a^2}{(a^2-b^2)^2}\left(\frac{a^2}{2}-2b^2\ln a-b^4\frac{a^{-2}}{2}\right)-
\frac{\gamma-1}{R\gamma}\frac{v_m^2}{2}.	
\end{aligned}
$$

Figure \ref{fig48} shows a Schlieren image of the numerical results displaying the magnitude of the density gradient field,
$\|\nabla\rho\|_2$, of the $\bm U$-solution computed on a uniform mesh with $N_x=1200$ and $N_y=601$ at time $t=0.7$ along with the
reference solution obtained with the help of a sevent-order WENO-DeC scheme from \cite{micalizzitoro2024,micalizzi2025algorithms}, which was
implemented using with reconstruction of characteristic variables and exact Riemann problem solver on a uniform $800\times401$ mesh. In this
figure, we have used the following shading function:
$$
\exp{\left(-\frac{K\|\nabla\rho\|_2}{\max\|\nabla\rho\|_2}\right)},\quad K=80,
$$
where the numerical density derivatives are computed using central differencing.
\begin{figure}[ht!]
\centerline{\includegraphics[width=0.7\textwidth]{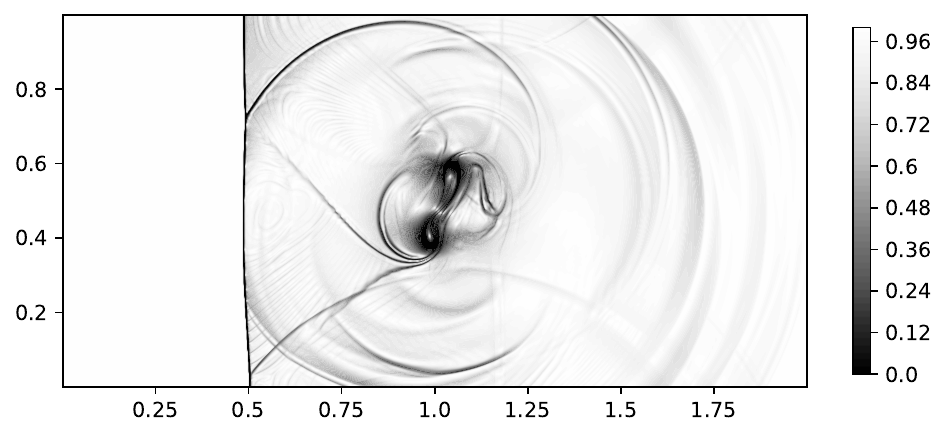}}
\vskip5pt
\centerline{\includegraphics[width=0.7\textwidth]{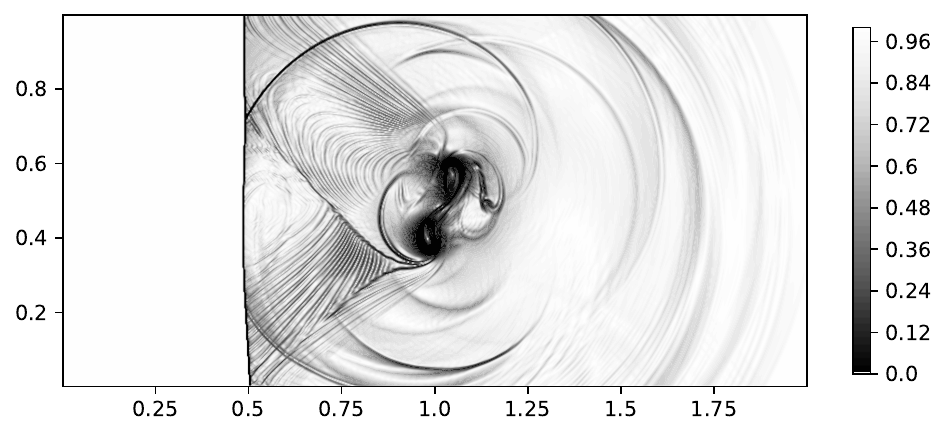}}
\caption{\sf Example 7: Schlieren image of the density gradient of the DF-FV $\bm U$-solution (top) along with the reference solution
(bottom).\label{fig48}}
\end{figure}

Clearly, the reference solution is more accurate and detailed than the DF-FV one. However, as one can see, all the relevant flow features
are correctly captured by the DF-FV method. We note that the computed results also agree well with those obtained in the literature; see,
e.g., \cite{geisenhofer2019discontinuous}.

\paragraph{Example 8 (2-D Riemann problem).} 
In the last example, we consider the 2-D Riemann problem (Configuration 3) from \cite{kurganov2002solution} (also see \cite{chu2025new}). In
the computational domain $[0,1.2]\times[0,1.2]$ with transmissive boundary conditions, we prescribe the following initial conditions:
$$
\bm V(x,y,0)=\begin{cases}(1.5,0,0,1.5)^\top,&x>1,~y>1,\\(0.5323,1.206,0,0.3)^\top,&x<1,~y>1,\\
(0.138,1.206,1.206,0.029)^\top,&x<1,~y<1,\\(0.5323,0,1.206,0.3)^\top,&x>1,~y<1,
\end{cases}
$$
and we run the simulations on a uniform mesh with $N_x=N_y=1000$ until the final time $t=1$. The obtained density component of the
$\bm U$-solution is plotted in Figure \ref{fig411}. As one can see, the obtained result is consistent with that reported in
\cite{chu2025new}, illustrating the ability of the DF-FV method to capture complex flow features of this benchmark.
\begin{figure}[ht!]
\centerline{\includegraphics[width=0.37\textwidth]{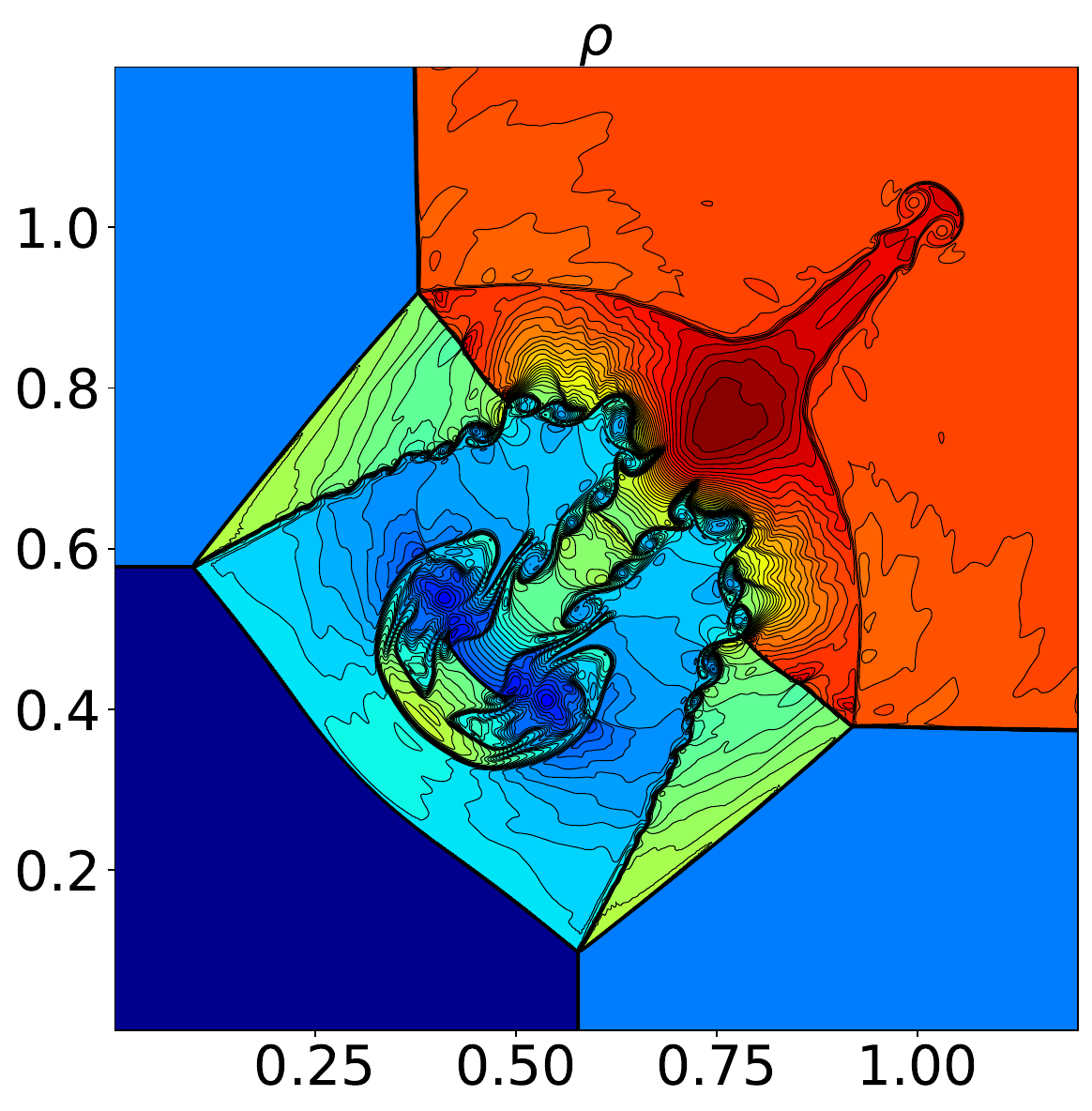}}
\caption{\sf Example 8: Density-component of the $\bm U$-solution.\label{fig411}}
\end{figure}

\section{Conclusions}\label{sec5}
In this paper, we have introduced new methods for one- and two-dimensional hyperbolic systems of conservation laws, for which we consider
two different formulations of the studied systems (the original conservative formulation and a primitive one containing nonconservative
products), and discretize them on overlapping staggered meshes using two different numerical methods. Both the conservative and primitive
variables are evolved in time using second-order semi-discrete finite-volume (FV) methods. The nonconservative system is discretized by a
path-conservative central-upwind scheme, and its solution is used to evaluate very simple numerical fluxes for the original conservative
system. The nonlinear stability of the resulting DF-FV methods is enforced with the help of a post-processing, which also guarantees a
conservative coupling between the two sets of variables. The performance of the proposed methods has been demonstrated on a number of
benchmarks.

The introduced DF-FV methods share an important feature with the AF formulation presented in \cite{Abg23}---the reliance on
extra degrees of freedom used to discretize a nonconservative primitive formulation of the governing equations. On the other hand, several
differences exist between the proposed approach and AF schemes.

\smallskip
\noindent
$\bullet$ While DF-FV methods make use of cell averages of conserved and primitive variables on overlapping grids, AF schemes consider cell
averages of the conserved variables and point values (either of conserved variables as in \cite{eymann2011active,eymann2013multidimensional}
or primitive variables as in \cite{Abg23}) at cell interfaces.

\smallskip
\noindent
$\bullet$ In the DF-FV methods, within each time evolution step, the primitive cell averages are evolved independently from the conserved
ones, differently from the AF approach, in which the update of the point values makes an explicit use of the cell averages. This creates the
need, within the DF-FV framework, for a suitable post-processing to reinstate the necessary coupling between the considered degrees of
freedom. On the other hand, such a decoupling may be beneficial. For example, having two sets of data for the discrete solution has been
used to design a smoothness indicator based on the difference between these two solutions; see \cite{ChKuMi_proc}. This smoothness indicator
can be used to develop different adaptation strategies, which may substantially enhance the resolution achieved by the DF-FV method. In
addition, we have been working on applications of the proposed framework to other problems, such as compressible multifluid flows, whose
investigation is left for an upcoming paper.

\smallskip
\noindent
$\bullet$ The 2-D DF-FV method relies on a different set of degrees of freedom from those used in the AF approach, as the 2-D DF-FV method
does not consider degrees of freedom at cell nodes.

\smallskip
We would also like to emphasize that the main difficulty in designing higher-order extensions of the proposed DF-FV methods is related to
developing a higher-order post-processing. Such post-processing should be based on a uniformly accurate reconstruction/interpolation. One
can use, for example, CWENO reconstructions/interpolations (see \cite{CPSV,CSV,SemVis} and references therein), and we plan to explore this
possibility in our future work.

\subsection*{Acknowledgment}
R. Abgrall was partialy supported by SNSF grant 200020\_204917 ``Structure preserving and fast methods for hyperbolic systems of
conservation laws''. The work of A. Chertock was supported in part by NSF grant DMS-2208438. The work of A. Kurganov was supported in part
by NSFC grants 12171226 and W2431004. The work of L. Micalizzi was supported in part by the LeRoy B. Martin, Jr. Distinguished Professorship
Foundation.

\noindent \section*{In memoriam}

\noindent This paper is dedicated to the memory of Prof. Arturo Hidalgo L\'opez
($^*$July 03\textsuperscript{rd} 1966 - $\dagger$August 26\textsuperscript{th} 2024) of the Universidad Politécnica de Madrid, organizer of HONOM 2019 and active participant in many other editions of HONOM.
Our thoughts and wishes go to his wife, Lourdes, and his sister, Mar\'ia Jes\'us, whom he left behind.


\end{document}